\newtheorem{theorem}{Theorem}
\newtheorem{definition}{Definition}
\newtheorem{lemma}[theorem]{Lemma}%
\newtheorem{remark}{Remark}%
\newtheorem{example}{Example}%
\newcommand{\argmin}{\mathop{\rm argmin}\limits}
\def\b\phi{{\mathbb{\phi}}}
\def\int{\mathop{\rm int}}
\def\Tr{\mathop{\rm Tr}}
\def\cX{\mathcal X}
\def\X{{\mathcal{X}}}
\def\Y{{\mathcal{Y}}}
\def\cH{{\mathcal{H}}}
\def\b{{\beta}}
\def\Pro{\mathop{\Gamma}\nolimits}
\def\QED{\mbox{\rule[0pt]{1.5ex}{1.5ex}}}
\newcommand{\qed}{\hfill \QED}
 \newenvironment{proofof}[1]{\vspace*{5mm} \par \noindent
{\it Proof of #1:\hspace{2mm}}}{\qed
%\hfill$\Box$ \vspace*{3mm}
}
\newcommand{\nc}{\newcommand}
\nc{\ketbra}[2]{|#1\rangle\!\langle#2|}
\nc{\bra}[1]{\langle#1|}
\nc{\ket}[1]{|#1\rangle}
\def\Label#1{\label{#1}\ [\ \text{#1}\ ]\ }
\def\Label{\label}
\newcommand{\red}[1]{{\color{red} #1}}
\begin{document}

\title{Bregman-divergence-based Arimoto-Blahut algorithm}
\author{%
Masahito~Hayashi %,~\IEEEmembership{Fellow,~IEEE}
\thanks{The work of MH was supported 
in part by the National Natural Science Foundation of China (Grant No. 62171212)
and 
the General R\&D Projects of 
1+1+1 CUHK-CUHK(SZ)-GDST Joint Collaboration Fund 
(Grant No. GRDP2025-022).}
\thanks{
Masahito Hayashi is with 
School of Data Science, The Chinese University of Hong Kong, Shenzhen, Longgang District, Shenzhen, 518172, China,
International Quantum Academy (IQA), Futian District, Shenzhen 518048, China,
and
Graduate School of Mathematics, Nagoya University, Chikusa-ku, Nagoya 464-8602, Japan.
(e-mail: hmasahito@cuhk.edu.cn).}}
\markboth{Bregman-divergence-based Arimoto-Blahut algorithm}{}

\maketitle

\begin{abstract}
We generalize the generalized Arimoto-Blahut algorithm to a general function defined over Bregman-divergence system.
In existing methods, when linear constraints are imposed,
each iteration needs to solve a convex minimization.
Exploiting our obtained algorithm, 
we propose a minimization-free-iteration algorithm.
This algorithm can be applied to 
classical and quantum rate-distortion theory.
We numerically apply our method to the derivation of 
the optimal conditional distribution in the rate-distortion theory.
\end{abstract}

\begin{IEEEkeywords} 
Bregman divergence,
rate-distortion,
em-algorithm,
mixture family, convex-minimization-free
\end{IEEEkeywords}

\section{Introduction}\Label{S1}
Arimoto-Blahut algorithm is a famous algorithm to solve the optimization problem in information theory \cite{Arimoto,Blahut}. Originally, it aims the calculation of the channel capacity, i.e., the maximization of the mutual information. 
Later, it was extended to the calculation of the capacity of classical-quantum channel \cite{Nagaoka}. 
Recently, this algorithm was extended to a general minimization problem defined over the set of quantum states \cite{RISB}.
The paper \cite{HSF1} showed that
the iteration in the extended Arimoto-Blahut algorithm 
is the same as the iteration in the mirror descent algorithm among the above setting 
when the objective function is convex.
The extended Arimoto-Blahut algorithm
has the following advantage over the mirror descent algorithm. 
The extended Arimoto-Blahut algorithm
gives each iteration without any optimization in the above setting
while the mirror descent algorithm 
requires solving a convex minimization in each iteration.

Moreover, the extended Arimoto-Blahut algorithm
was extended to a general minimization problem defined over 
the set of probability distributions with linear constraint \cite{Hmixture}, and also 
that over the set of quantum states with linear constraint \cite{HL}.
Such a set with linear constraints is called a mixture family.
In statistics and information theory,
another type of a subset of distributions, 
an exponential family, takes an important role \cite{Amari-Nagaoka}. 
In information theory and machine learning,
people often focus on the miminum divergence problem between a given mixture family and a given exponential family.
This problem appears in Boltzmann machine \cite{AKN}.
The em-algorithm is known as a typical method to solve this problem \cite{H23}.
Further, the paper \cite{Hmixture} showed that the extended 
Arimoto-Blahut algorithm covers 
the minimization problem to be solved by the em-algorithm.
However, since the extended 
Arimoto-Blahut algorithm is still limited to functions defined over 
the set of probability distributions or the set of quantum states,
its applicable area is quite limited.
That is, the algorithm cannot be applied an optimization problem that has no relation with probability distributions or quantum states.
To extract the merit of the extended Arimoto-Blahut algorithm
even in a general optimization,
it is needed to formulate the extended 
Arimoto-Blahut algorithm in a more general setting.

In addition to the above problem, the existing extended Arimoto-Blahut algorithm 
%over the set of quantum states with linear constraint 
has the following two problems.
As the first problem, it is unclear whether the equivalence relation with 
the mirror descent algorithm holds even under linear constraint.
As the second problem, 
the extended Arimoto-Blahut algorithm given in \cite{Hmixture}
requires the calculation of e-projection in each iteration.
An e-projection is a projection to a mixture family 
along an exponential family, and needs to solve a convex minimization whose number of variables equals the number of linear constraints.
This minimization step can be considered as a bottleneck 
in the extended Arimoto-Blahut algorithm of this case.

This problem is linked to the difficulty of the em-algorithm.
As presented in \cite{Hmixture},
the em-algorithm is a special case of 
the extended Arimoto-Blahut algorithm.
The em-algorithm is an algorithm to calculate 
the minimum divergence between a mixture family and an exponential family, and 
has been studied 
in the areas of machine learning and neural networks \cite{Amari1,Amari,Amari-Nagaoka,Allassonniere,Fujimoto}.
The em-algorithm is composed of the $e$-projection to the mixture family 
and the $m$-projection to the exponential family.
While the $m$-projection is given as an affine operation for the probability distribution,
the $e$-projection requires a more complicated calculation, i.e., a convex optimization.
%In particular, as pointed in \cite{H23},
%the em-algorithm can be used to derive the optimal conditional distribution for the rate distortion theory.
Therefore, if the above 
bottleneck in the extended Arimoto-Blahut algorithm 
is resolved,
this method can be applied to the minimization of 
the divergence between a mixture family and an exponential family.

In fact, the em-algorithm is important even from the viewpoint of information theory as follows.
Originally, Blahut \cite{Blahut} studied 
the minimization of the mutual information 
in the context of rate-distortion theory
while rate-distortion theory can be applied to 
machine learning \cite{Deciding}.
Rate-distortion theory is formulated 
as an optimization problem of a joint distribution over 
given two system spaces with linear constraints.
That is, under the linear constraints,
we minimize the mutual information between these two systems.
Blahut \cite{Blahut}'s proposed algorithm minimizes only the sum of the mutual information and a constant times of the linear constraint, where the constant can be considered as 
the Lagrange multiplier.
He showed that there exists a constant such that 
the solution satisfies the given constraint,
but he did not present how to find such a good constant.
To resolve this problem, the recent paper \cite{H23}
found that the minimization of rate-distortion theory
can be solved by 
the em-algorithm.
When the em-algorithm is applied to the minimization,
the process of the e-projection
essentially seeks the suitable Lagrange multiplier.
That is, in the method \cite{H23},
each iteration updates the Lagrange multiplier.
Since the e-projection requires solving a convex minimization,
avoiding such a convex minimization
is essential even for the minimization of mutual information in rate-distortion theory.

This paper addresses the above three problems as follows.
First, we formulate
the extended Arimoto-Blahut algorithm by using 
Bregman-divergence.
This formulation allows us to handle a minimization problem
under a very general setting.
In this formulation, each iteration is given by using e-projection to a mixture family

Second, we show that 
the iteration in our extended Arimoto-Blahut algorithm 
is the same as the iteration in the mirror descent algorithm among the above setting  when the objective function is convex.
Although 
%these two algorithms 
our extended Arimoto-Blahut algorithm and
the mirror descent algorithm
have the same iteration under the convexity condition, our extended Arimoto-Blahut algorithm
has the following advantage.
%when the domain is given as a mixture family 
%of probability distributions with linear constraints.
The mirror descent algorithm requires a convex minimization whose 
number of variables equals the number of original 
linear constraints.
When we choose a Bregman divergence in a suitable way,
we can avoid a convex minimization in each iteration.

Third, using the above type of choice of a Bregman divergence,
%using our general formulation,
we propose a minimization-free-iteration
iterative minimization algorithm for the same problem studied in the paper \cite{Hmixture}.
This method can be applied to the minimization of 
the divergence between a mixture family and an exponential family, which includes the derivation of
the optimal conditional distribution for the rate-distortion theory.
This method iteratively modifies the objective function's input, potentially moving it outside the original domain.  Therefore, the objective function's domain must be extended.
%\red{This method has the following risk. This method also iteratively changes the input element of the given objective function.
%There is a possibility that the input element moves out of the original domain. Hence, we need to extend the domain of the objective function.}

The remainder of this paper is organized as follows.
Section \ref{S2} explains a Bregman divergence system as our preparation.
Section \ref{S3} formulates our minimization problem over a convex set
with Bregman divergence,
and presents our algorithm.
The presented general problem covers various problems
including 
channel coding \cite{RISB},  
Boltzmann machine \cite{AKN}, rate-distribution
theory on classical and quantum systems \cite{H23}.
Section \ref{S4} shows that the iteration in our is the same as the iteration in the mirror descent algorithm among the above setting 
when the objective function is convex.
Section \ref{S5} applies our algorithm to the case when the objective function is given over a set of probability distributions with linear constraints.
Then, we propose a minimization-free-iterative algorithm for this case.
Section \ref{S6} applies the algorithm given in 
Section \ref{S5} to the problem of the em-algorithm.
Section \ref{S7} applies it to the rate-distortion theory and makes a numerical analysis, where 
the minimum compression rate in the rate-distortion theory is 
given as the minimization of the mutual information
by changing the conditional distribution with fixed marginal distribution of the input system.
Section \ref{S8} applies our algorithm to the case of quantum states with linear constraints, 
which covers 
including classical-quantum channel coding \cite{RISB},
information bottleneck \cite{HY,HL},
quantum em algorithm, and
quantum rate-distribution theory \cite{H23}.
Sections \ref{S9} and \ref{S10}
are devoted to the proofs of theorems given in Section \ref{S3}.
Section \ref{S11} makes conclusions.

\section{Bregman divergence system}\Label{S2}
\subsection{Legendre transform}\Label{S2-0}
In this paper, 
a sequence $a= (a^i)_{i=1}^\ell$ with an upper index expresses
a vertical vector 
and 
a sequence $b= (b_i)_{i=1}^\ell$ with a lower index expresses
a horizontal vector as
\begin{align}
a= \left(
\begin{array}{c}
a^1 \\
a^2 \\
\vdots \\
a^\ell
\end{array}
\right), \quad
b= (b_1, b_2,\ldots, b_\ell).
\end{align}

We choose an open convex $\Theta$ set in $\mathbb{R}^{d}$ and 
a $C^2$-class strictly convex function $\phi:\Theta \rightarrow \mathbb{R}$.
%When the above function $\phi$ is given,
Using the convex function $\phi$, we introduce another parametrization 
$\eta =(\eta_1, \ldots, \eta_{d})\in \mathbb{R}^{d}$ as
\begin{align}
\eta_j := \partial_j \phi(\theta),\Label{du1}
\end{align}
where $\partial_j$ expresses the partial derivative for the $j$-th variable
$\frac{\partial }{\partial \theta^j}$.
We also use the notation for the vector 
$\nabla^{(e)} [\phi](\theta):=
(\partial_j \phi(\theta))_{j=1}^{d}$.
Hence, the relation \eqref{du1} is rewritten as
\begin{align}
\eta (\theta)= \nabla^{(e)} [\phi](\theta).\Label{M1}
\end{align}
Therefore, $\nabla^{(e)}$ can be considered as a horizontal vector.

Since $\phi$ is $C^2$-class strictly convex function,
the map $\theta \to \eta (\theta)$ is one-to-one.
The parametrization 
$\eta_j$ is called the mixture parameter
while the parameter $\theta=(\theta^j)_j$ is called the natural parameter.
In the following, $\Xi$ expresses the open set of vectors $\eta(\theta) =(\eta_1, \ldots, \eta_{d})$ 
given in \eqref{du1}.
That is, $\nabla^{(e)} [\phi](\Theta)=\Xi$.
%We denote the image of $\nabla^{(e)} [\phi]$ by $H$
Hence, we denote the inverse function by $\eta\mapsto \theta(\eta)$
with the domain $\Xi$.
For $\eta \in \Xi$,
we define the {\it Legendre transform} 
$\phi^*$ of $\phi$ %as \cite{}
\begin{align}
\phi^*(\eta)=\sup _{\theta \in \Theta} \langle \eta,\theta\rangle -\phi(\theta).
\Label{MN1}
\end{align}

We denote the partial derivative for the $j$-th variable under the mixture parameter by $\partial^j$, i.e., $\frac{\partial }{\partial \eta^j} $.
The partial derivative of $\phi^*$ is given as
\cite[Section 3]{Fujimoto}\cite[Section 2.2]{hayashi}
\begin{align}
\partial^j\phi^*
%\frac{\partial \phi^*}{\partial \eta^j} 
(\eta(\theta) )=\theta^j. \Label{du2}
\end{align}
In the same way as the above, we use the notation
$\nabla^{(m)} [\phi^*](\eta):=(\frac{\partial \phi^*}{\partial \eta_j}(\eta))_{j=1}^{d}$.
The relation \eqref{du2} is rewritten as
\begin{align}
\theta = \nabla^{(m)} [\phi^*] (\eta(\theta) ).\Label{M2}
\end{align}
Therefore, it is also possible to start the parameter $\eta$ and the convex function 
$\phi^*$ and reproduce the parameter $\theta$ in the above way.

Next, we introduce the concept of  Bregman divergence, which is a generalization of the conventional divergence.
\begin{definition}[Bregman divergence]
We choose an open set $\Theta$ in $\mathbb{R}^{d}$ and 
a $C^2$-class strictly convex function $\phi:\Theta \rightarrow \mathbb{R}$.
We define the Bregman divergence $D^\phi$ as
\begin{align}
    D^{\phi}(\theta_1 \| \theta_2)
    :=& 
    \langle \nabla^{(e)}[\phi](\theta_1), \theta_1 - \theta_2\rangle 
    - \phi(\theta_1)+\phi(\theta_2)     \nonumber \\
=&\sum_{j=1}^{d} \eta_j(\theta_1) (\theta_1^j - \theta_2^j)
    - \phi(\theta_1)+\phi(\theta_2)   
\Label{XZL}
\end{align}
for $\theta_1, \theta_2 \in \Theta$.
\end{definition}
The triplet
$(\theta,\eta,D^{\phi}(\cdot \| \cdot))$
is called 
the Bregman divergence system defined by $\phi$.
When we use the parameter $\eta$, 
the Bregman divergence $D^\phi$ is rewritten as
\begin{align}
&    D^{\phi}(\theta(\eta) \| \theta(\eta'))
    =D^{\phi^*}(\eta' \| \eta)\\
    =& 
    \sum_{j=1}^{d}\theta(\eta')^j ( \eta'_j-\eta_j)+\phi^*(\theta(\eta) )
-\phi^*(\theta(\eta') ).
\Label{XZL3}
\end{align}

\begin{table}[t]
\caption{Notations with natural parameter}
\label{notation-1}
\begin{center}
\begin{tabular}{|c|l|c|}
\hline
Symbol &Description &Eq. number    \\
\hline
$\theta$ &natural parameter $(\theta_1,\ldots,\theta_d)$&    \\
\hline
$\Theta\subset \mathbb{R}^d$ & Parameter space for natural parameter&   \\
\hline
$\phi$ & convex function &   \\
\hline
$D^{\phi}$ & Bregman divergence for $\phi$& \eqref{XZL}  \\
\hline
$\partial^{j}$ & partial derivative with respect to $\theta_j$ &   \\
\hline
\multirow{2}{*}{$\nabla^{(e)}$} & 
%\multirow{2}{*}{$(\partial_j \phi(\theta))_{j=1}^{d}$}&\\
vector composed of partial derivatives &   \\
&with respect to natural parameter &   \\
\hline
\end{tabular}
\end{center}
\end{table}

\begin{table}[t]
\caption{Notations with mixture parameter}
\label{notation-2}
\begin{center}
\begin{tabular}{|c|l|c|}
\hline
Symbol &Description &Eq. number    \\
\hline
$\eta$ &mixture parameter $(\eta^1,\ldots,\eta^d)$&    \eqref{du1}\\
\hline
$\eta_{(d_0)}$ & $(\eta_1,\ldots,\eta^{d_0})$ &   \\
\hline
$\Xi\subset \mathbb{R}^d$ & Parameter space for mixture parameter&   \\
\hline
$\phi^*$ & Legendre transform of convex function $\phi$ &  \eqref{MN1} \\
\hline
$D^{\phi^*}$ & Bregman divergence for $\phi^*$&  
\eqref{XZL3}
 \\
\hline
$\partial_{j}$ & partial derivative with respect to $\eta^j$ &   \\
\hline
\multirow{2}{*}{$\nabla^{(m)}$} & 
vector composed of partial derivatives &   \\
&with respect to mixture parameter &   \\
\hline
\end{tabular}
\end{center}
\end{table}

\subsection{Mixture family}\Label{S2-2}
Next, we introduce a mixture family, and discuss its properties.
%\begin{definition}[mixture family]\Label{DD1}
For $d$ linearly independent vectors $u_1, \ldots, u_{d} \in \mathbb{R}^{d}$, and 
a vector $c=(c_1, \ldots, c_{k} )^T \in \mathbb{R}^{k}$, we say that
a subset $\mathcal{M} \subset \Theta$ is a {\it mixture family} 
generated by the constraint 
\begin{align}
\sum_{i=1}^{d} u^i_{d_0+j} %\frac{\partial \phi}{\partial \theta^i}
\partial_i \phi(\theta) =c_j\Label{const1}
\end{align}
 for $j=1, \ldots, k$ and $d_0=d-k$
%generated by $l$ vectors $v_1,..., v_l \in \mathbb{R}^l$ with a constraint vector $a \in \mathbb{R}^l$
when the subset $\mathcal{M}$ 
is written as
\begin{align}
    \mathcal{M} = \left\{ \theta  \in \Theta \left| \hbox{ Condition \eqref{const1} holds.} \right. \right\} 
\end{align}
%\end{definition}
The $d \times d$ matrix $U$ is defined as $(u_1 \ldots u_{d} )$.
To make a parametrization in the above mixture family ${\cal M}$,
%we rewrite $l$ vectors $w_1, \ldots, w_l$ as $\bar{w}_{d-l+1}, \ldots, \bar{w}_{d}$,
%and choose $d-l$ vectors $\bar{w}_1, \ldots, \bar{w}_{d-l}\in \mathbb{R}^d$
%such that $\bar{w}_1, \ldots, \bar{w}_{d}$ span $\mathbb{R}^d$.
we set the new natural parameter $\bar{\theta}=(\bar{\theta}^1, \ldots, \bar{\theta}^{d})$ as
$\theta=U \bar{\theta} $,
%as $\theta(\bar{\theta}):= \sum_{i=1}^d \bar{w}_i \bar{\theta}^i$, 
and introduce the new mixture parameter
\begin{align}
\bar{\eta}_i
=\partial_i (\phi \circ U) (\bar{\theta}).
\Label{Dif}
\end{align}
Since the relation $\bar{\eta}_{d_0+i}= c_i$ holds for 
$i=1, \ldots, k$ 
in ${\cal M}$,
the initial $d_0$ elements $\bar{\eta}_1, \ldots, \bar{\eta}_{d_0}$
give a parametrization for ${\cal M}$.

Therefore, in the following, without loss of generality, 
replacing the parameterization of the natural parameter $\theta$ 
by $U^{-1}\theta$,
we assume that the mixture family is defined by the following constraint:
\begin{align}
\partial^{d_0+j}\phi (\theta) =c_j\Label{const1B}
\end{align}
 for $j=1, \ldots,k$.
To make the parametrization, 
we define the map 
$\psi_{\cal M}^{(m)}$ on ${\cal M}$
as 
$\psi_{\cal M}^{(m)} ( {\theta}):= (\partial_j \phi ({\theta}))_{j=1}^{d_0}$.
The set $\Xi_{{\cal M}}:= 
\{ \psi_{\cal M}^{(m)} (\theta) |  {\theta} \in {\cal M}\}$
works as the range of the new mixture parameters,
and we also employ the inverse map 
%$\phi_{{\cal M}}^{(m)}:=
$(\psi_{{\cal M}}^{(m)})^{-1}: 
\Xi_{{\cal M}}\to {\cal M}$.
%Since $\Theta$ is an open set, the set $\Xi_{{\cal M}}$ is an open subset of $\mathbb{R}^{d_0}$.

Next, we discuss how the mixture family ${\cal M}$ is characterized 
only by the parameters
$\theta_{(d_0)}:= (\theta^1, \ldots, \theta^{d_0}) \in \mathbb{R}^{d_0}$
and
$\eta^{(d_0)}:= (\eta^1, \ldots, \eta^{d_0}) 
\in \mathbb{R}^{d_0}$.
Then, we notice that
\begin{align}
\nabla^{(e)} [\phi]({\cal M})
=\{(\eta^{(d_0)},c_1,\ldots,c_k)\}_{\eta^{(d_0)}\in
\Xi_{{\cal M}}}.
\Label{CGU}
\end{align}
When an element ${\eta} \in \Xi_{{\cal M}}$ satisfies 
${\eta}_j =\partial_j \phi ({\theta})$ for $j=1, \ldots, d_0$,
we have
\begin{align}
\partial^i \phi^*( {\eta}^{(d_0)},c) = {\theta}^i\Label{CO1}
\end{align}
 for $i=1, \ldots, d_0$.
The strict convexity of the map $\phi^*_{{\cal M}}:
{\eta}^{(d_0)} \mapsto \phi^*( {\eta}^{(d_0)},c) $ guarantees that
the map ${\eta} \mapsto (\partial^i \phi^*( {\eta},c))_{i=1}^{d_0}$
is one-to-one. 
Hence, the initial $d_0$ elements $\theta_{(d_0)}=({\theta}^1, \ldots, {\theta}^{d_0})$
form a parametrization for ${\cal M}$.
In other words, the relation 
\begin{align}
( \theta^i )_{i=1}^{d_0}
= (\partial^i \phi^*( \psi_{\cal M}^{(m)} ( {\theta}) ,c))_{i=1}^{d_0} \Label{NAY}
\end{align}
holds.
We define the set $\Theta_{\cal M}:= 
\{ ( \theta^i )_{i=1}^{d_0} | {\theta} \in {\cal M} \}$.
By using the notation 
$\theta_{(d_0+1,d)}:= (\theta^{d_0+1}, \ldots, \theta^d) \in \mathbb{R}^{k}$,
the set $\Theta_{\cal M}$ is rewritten as
\begin{align}
&\Theta_{\cal M}\nonumber \\
=& 
\left\{\theta_{(d_0)} \in \mathbb{R}^{d_0} \left|
\begin{array}{l}
\exists \theta_{(d_0+1,d)}(\theta_{(d_0)}) \in \mathbb{R}^{k} 
\hbox{ such that}  \\
\partial_j \phi (\theta_{(d_0)}, \theta_{(d_0+1,d)}(\theta_{(d_0)})) =c_j \\
\hbox{ for } j=d_0+1, \ldots,d.
\end{array}
\right.\right\}.
\Label{const1-U}
\end{align}

We define the Legendre transform $\phi_{{\cal M}} $ of 
$\phi^*_{{\cal M}} $ as
\begin{align}
&\phi_{{\cal M}} (\theta_{(d_0)}):=
\sup_{\eta} \langle \eta^{(d_0)}, \theta_{(d_0)}\rangle 
- \phi^*_{{\cal M}} (\eta^{(d_0)})\nonumber \\
=&
\inf_{\theta^{d_0+1}, \ldots, \theta^{d}} 
\phi (\theta^{(d_0)},\theta^{d_0+1}, \ldots, \theta^{d})
-\sum_{j=1}^{k} \theta^{d_0+j} c_j.
\end{align}
Then, we have
\begin{align}
&D^\phi((\theta_{(d_0)}, \theta_{(d_0+1,d)}(\theta_{(d_0)}))\|
(\theta_{(d_0)}', \theta_{(d_0+1,d)}(\theta_{(d_0)}'))) \nonumber \\
=
&D^{\phi^*} \Big(
(\partial^{j}\phi (\theta_{(d_0)}, \theta_{(d_0+1,d)}(\theta_{(d_0)})))_{j=1}^{d}
\Big\|\nonumber \\
&\quad
\partial^{j}\phi(\theta_{(d_0)}', \theta_{(d_0+1,d)}(\theta_{(d_0)}')))_{j=1}^{d}\Big)
 \nonumber \\
=
&D^{\phi^*_{\cal M}}\Big(
(\partial^{j}\phi (\theta_{(d_0)}, \theta_{(d_0+1,d)}(\theta_{(d_0)}))
)_{j=1}^{d_0}
\Big\|\nonumber \\
&\quad
(\partial^{j}\phi(\theta_{(d_0)}', \theta_{(d_0+1,d)}(\theta_{(d_0)}'))
)_{j=1}^{d_0}
\Big) \nonumber \\
=&
D^{\phi_{{\cal M}}}(\theta_{(d_0)}\|\theta_{(d_0)}').
\end{align}
Therefore, 
the mixture family ${\cal M}$
can be characterized by 
the Bregman divergence system defined by $\phi_{\cal M}$.

We define the $e$-projection 
$\Pro^{(e),\phi}_{\mathcal{M}}$
to ${\cal M}$ as \cite{Amari1,Amari,Amari-Nagaoka}
\cite[Eq. (53)]{H23}\footnote{The reference \cite{H23} uses the terminology
$e$-projection and $m$-projection in the opposite way.
Since the projection to a mixture family ${\cal M}$ is done along an exponential family,
it should be called 
the $e$-projection to a mixture family ${\cal M}$.}
\begin{align}
\Pro^{(e),\phi}_{\mathcal{M}} (\overline{\theta}):=
\argmin_{\theta' \in {\cal M}}
D^\phi(\theta'\|\overline{\theta}).\Label{ZCV}
\end{align}
For an element $\theta \in \mathcal{M}$
and a general element $\overline{\theta} \in \Theta$,
the $e$-projection $\Pro^{(e),\phi}_{\mathcal{M}}$
satisfies 
Pythagorean Theorem for Bregman divergences 
\cite{Amari-Nagaoka},\cite[Proposition 1 and Lemma 2]{H23} as
\begin{align}
D^\phi(\theta\|\overline{\theta})
=
D^\phi(\theta\| \Pro^{(e),\phi}_{\mathcal{M}} (\overline{\theta}))
+D^\phi(\Pro^{(e),\phi}_{\mathcal{M}} (\overline{\theta})\| \overline{\theta}).
\Label{BVY}
\end{align}
This relation is a key equation in information geometry.
The calculation method for the $e$-projection 
$\Pro^{(e),\phi}_{\mathcal{M}} (\overline{\theta})$
is explained in 
\cite{H23} by solving a convex minimization as follows.
To explain its detail, we need to explain the exponential family 
\begin{align}
{\cal E}:=
\{(\overline{\theta}_{d_0},\theta_{(d_0+1,d)})
| \theta_{(d_0+1,d)} \in \mathbb{R}^{k}\}
\end{align}
that contains $\overline{\theta}$.
The $e$-projected element $\Pro^{(e),\phi}_{\mathcal{M}} (\overline{\theta})$
belongs to the mixture family ${\cal M}$
and the exponential family ${\cal E}$.
Hence, the $e$-projected element $\Pro^{(e),\phi}_{\mathcal{M}} (\overline{\theta})$ has the form 
$(\overline{\theta}_{d_0},\theta_{(d_0+1,d)})$.
That is, we need to identify $\theta_{(d_0+1,d)}$.
Due to \eqref{const1B},
the condition $(\overline{\theta}_{d_0},\theta_{(d_0+1,d)}) \in {\cal M}$
is equivalent to 
\begin{align}
\partial_{d_0+i}\phi(\overline{\theta}_{d_0},\theta_{(d_0+1,d)})=c_i \Label{SBN}
\end{align}
for $i=1, \ldots, k$.
Since $\phi$ is convex function, the solution of \eqref{SBN} is the minimizer of 
$\min_{\theta_{(d_0+1,d)}}
\phi(\overline{\theta}_{d_0},\theta_{(d_0+1,d)})$.

\begin{table}[t]
\caption{Notations related to mixture family}
\label{notation-3}
\begin{center}
\begin{tabular}{|c|l|c|}
\hline
Symbol &Description &Eq. number    \\
\hline
${\cal M}\subset \mathbb{R}^{d}$ &mixture family &    \eqref{du1}\\
\hline
$\Theta_{\cal M}\subset \mathbb{R}^{d_0}$ &natural parameter for ${\cal M}$ 
& \eqref{du1}\\
\hline
$\Xi_{\cal M}\subset \mathbb{R}^{d_0}$ &mixture parameter for ${\cal M}$&    \eqref{du1}\\
\hline
$\theta_{(d_0)}$ & $(\theta^1,\ldots,\theta^{d_0})$ &   \\
\hline
$\theta_{(d_0+1,d)}$ & 
$(\theta^{d_0+1},\ldots,\theta^{d})$ &   \\
\hline
$\eta^{(d_0)}$ & $(\eta_1,\ldots,\eta_{d_0})$ &   \\
\hline
$\Pro^{(e),\phi}_{\mathcal{M}}$ & 
$e$-projection to $\mathcal{M}$ & \eqref{ZCV}  \\
\hline
$k$ & 
number of linear constraints, $d-d_0$&  \\
\hline
\end{tabular}
\end{center}
\end{table}

\section{Bregman-divergence-based Arimoto-Blahut algorithm}\Label{S3}
\subsection{Our general algorithm}
In this paper, we address 
the minimization with the following objective function
\begin{align}
\tilde{\cal G}(\eta):=\sum_{j=1}^d \eta_j \tilde{\Omega}^j(\eta)
\Label{form2}
\end{align}
with a function $\tilde{\Omega}$ from a convex subset 
${\cal D} \subset \mathbb{R}^d$ to $\mathbb{R}^d$.
That is, our problem is formulated as the following two problems;
\begin{align}
{\cal T}:=\min_{\eta \in {\cal D}} \tilde{\cal G}(\eta), \quad
\eta_{*}:=\argmin_{\eta \in {\cal D}} \tilde{\cal G}(\eta).
\label{RBE2}
\end{align}

To address the above problem, 
we assume that there exist
a convex function $\phi$ defined an open subset 
$\Theta \subset \mathbb{R}^d$
and
a mixture family ${\cal M}$ of 
the Bregman divergence system $(\theta,\eta,D^{\phi}(\cdot \| \cdot))$
defined by $\phi$
such that the convex subset 
${\cal D} \subset \mathbb{R}^d$ equals
the set $\Xi_{{\cal M}}$ of the mixture parameters 
of the mixture family ${\cal M}$.
Using 
the one-to-one map $\eta \to \theta (\eta)$ 
defined by the partial derivative of $\phi$,
we consider the above problems with the coordinate $\theta$. 
For this aim, we define the function as
\begin{align}
{\cal G}(\theta):=\sum_{j=1}^d \eta_j(\theta) \Omega^j(\theta),
\quad
\Omega(\theta):=\tilde{\Omega}(\eta(\theta)).
\Label{form1}
\end{align}
%where $\eta_{d_0+1}(\theta), \ldots \eta_d(\theta)$ are fixed to be 
%$\eta_{d_0+1}, \ldots \eta_d$.
The above minimization is rewritten as 
\begin{align}
{\cal T}=\min_{\theta \in {\cal M}} {\cal G}(\theta), \quad
\theta_{*}:=\argmin_{\theta \in {\cal M}} {\cal G}(\theta).
\label{RBE}
\end{align}
%Since the map $\eta \to \theta (\eta)$ is one-to-one, 
The following discussion is based on the form \eqref{form1}.

We define the conversion function ${\cal F}_\gamma$ from $\Theta$ to $\Theta$
as
\begin{align}{\cal F}_\gamma (\theta):= 
\theta -\frac{1}{\gamma} \Omega(\theta).
\Label{BF6}
\end{align}
Then, we propose Algorithm \ref{AL1}.
When the calculation of $ \Omega(\theta)$ and 
the $e$-projection is feasible, Algorithm \ref{AL1} is feasible.

\begin{algorithm}
\caption{BD-based AB algorithm for ${\cal G}(\theta)$}
\Label{AL1}
\begin{algorithmic}
\STATE {Choose the initial value $\theta^{[1]} \in {\cal M}$;} 
\REPEAT 
%\STATE {Set $\hat{\theta}=\hat{\theta}^{[t]} \in \Theta_{\mathcal{M}}$;} 
\STATE Calculate $\theta^{[t+1]}:=
\Pro^{(e),\phi}_{\mathcal{M}} \circ {\cal F}_\gamma(\theta^{[t]})$;
\UNTIL{convergence.} 
\end{algorithmic}
\end{algorithm}

Then, the following two theorems hold for Algorithm \ref{AL1}.

\begin{theorem}\Label{NLT}
When all pairs $(\theta^{[t]},\theta^{[t+1]})$ satisfy 
the following condition with 
$(\theta,\theta')=(\theta^{[t]},\theta^{[t+1]})$
\begin{align}
D_\Omega(\theta\|\theta'):=\sum_{j=1}^d \eta_j(\theta) 
(\Omega^j(\theta)- \Omega^j(\theta'))
&\le \gamma D^\phi(\theta\|\theta')
\Label{BK1+} ,
\end{align}
%i.e., the positive number $\gamma$ is sufficiently large,
for some sufficiently large positive number $\gamma$,
Algorithm \ref{AL1} 
always iteratively improves the value of the objective function. 
%converges to a local minimum.
\end{theorem}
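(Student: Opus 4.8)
The plan is to recast one step of Algorithm~\ref{AL1} as a single mirror/proximal step and then read off the change in the objective from the Pythagorean identity \eqref{BVY}. Fix one iteration and abbreviate $\theta:=\theta^{[t]}\in{\cal M}$ and $\theta':=\theta^{[t+1]}=\Pro^{(e),\phi}_{\cal M}({\cal F}_\gamma(\theta))$. First I would expand the Bregman divergence to ${\cal F}_\gamma(\theta)=\theta-\frac1\gamma\Omega(\theta)$ and use $\eta_j=\partial_j\phi$ to obtain, for every $\theta'\in{\cal M}$,
\begin{align}
D^\phi(\theta'\|{\cal F}_\gamma(\theta))
=D^\phi(\theta'\|\theta)
+\frac1\gamma\sum_{j=1}^d\eta_j(\theta')\Omega^j(\theta)
+\phi({\cal F}_\gamma(\theta))-\phi(\theta).
\end{align}
Since the last two terms are independent of $\theta'$, the $e$-projection that defines $\theta'$ is exactly the minimizer over ${\cal M}$ of $\sum_j\eta_j(\theta')\Omega^j(\theta)+\gamma D^\phi(\theta'\|\theta)$; this identifies the iteration with a mirror-descent step and is the structural fact that drives the whole argument.

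Next I would feed the in-family point $\theta$ and the projected point $\theta'$ into the Pythagorean identity \eqref{BVY}, $D^\phi(\theta\|{\cal F}_\gamma(\theta))=D^\phi(\theta\|\theta')+D^\phi(\theta'\|{\cal F}_\gamma(\theta))$, and expand both divergences against ${\cal F}_\gamma(\theta)$ as above. Using $D^\phi(\theta\|{\cal F}_\gamma(\theta))=\frac1\gamma{\cal G}(\theta)+\phi({\cal F}_\gamma(\theta))-\phi(\theta)$ and cancelling the $\phi$-terms, the cross term collapses to the exact identity
\begin{align}
\sum_{j=1}^d\eta_j(\theta')\Omega^j(\theta)
={\cal G}(\theta)-\gamma\big(D^\phi(\theta\|\theta')+D^\phi(\theta'\|\theta)\big).
\end{align}
This is the key equality; it is sharper than the inequality coming from bare optimality of the $e$-projection because it retains both divergence terms.

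The third step is purely algebraic. Writing ${\cal G}(\theta')=\sum_j\eta_j(\theta')\Omega^j(\theta')$ and subtracting, the previous identity yields
\begin{align}
{\cal G}(\theta)-{\cal G}(\theta')
=\gamma D^\phi(\theta\|\theta')+\gamma D^\phi(\theta'\|\theta)
-\sum_{j=1}^d\eta_j(\theta')\big(\Omega^j(\theta')-\Omega^j(\theta)\big),
\end{align}
where the last sum is precisely $D_\Omega$ evaluated on the iterate pair. Since $D^\phi(\theta\|\theta')\ge0$ always, nonnegativity of the right-hand side follows once the $\gamma D^\phi$-term dominates this $D_\Omega$-term, which is exactly the hypothesized comparison \eqref{BK1+}. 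Hence ${\cal G}(\theta^{[t+1]})\le{\cal G}(\theta^{[t]})$, i.e.\ the objective improves at every step.

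The routine parts are the two divergence expansions; the points that need care are, first, that ${\cal F}_\gamma(\theta^{[t]})$ must lie in the (possibly extended) domain $\Theta$ so that $\phi$ and the $e$-projection are defined there—this is where taking $\gamma$ large, together with the domain extension announced in the introduction, enters—and, second, the bookkeeping of the \emph{order} of the arguments in the two Bregman terms, since the residual that the hypothesis must absorb is the one whose arguments are ordered as in the emerging identity. I expect the genuine obstacle to be nothing more than checking that the stated condition \eqref{BK1+} is the correct inequality, in the appropriate argument order, to cancel the $D_\Omega$ residual, and that the $e$-projection together with the Pythagorean identity \eqref{BVY} remain valid for the iterates produced by the algorithm.
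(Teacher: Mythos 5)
Your proposal is correct and is essentially the paper's own argument in different packaging: your mirror-step identification of the update is the paper's Lemma~\ref{L1} (that $\theta^{[t+1]}$ minimizes $J_\gamma(\cdot,\theta^{[t]})$, proved via the same Bregman expansion and the Pythagorean identity \eqref{BVY}), and your final identity ${\cal G}(\theta^{[t]})-{\cal G}(\theta^{[t+1]})=\gamma D^\phi(\theta^{[t]}\|\theta^{[t+1]})+\gamma D^\phi(\theta^{[t+1]}\|\theta^{[t]})-D_\Omega(\theta^{[t+1]}\|\theta^{[t]})$ is exactly what the paper's chain \eqref{SAC} (Lemmas~\ref{L1} and~\ref{L2}) yields, with \eqref{BK1+} absorbing the same residual. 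The argument-order caveat you flag is real but is shared by the paper: its own proof also invokes \eqref{BK1+} for the pair $(\theta^{[t+1]},\theta^{[t]})$ even though the theorem statement writes $(\theta^{[t]},\theta^{[t+1]})$.
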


The condition \eqref{BK1+} is rewritten by using the mixture parameter as
\begin{align}
\tilde{D}_{\tilde{\Omega}}(\eta\|\eta'):=\sum_{j=1}^d \eta_j
(\tilde{\Omega}^j(\eta)- \tilde{\Omega}^j(\eta'))
&\le \gamma D^{\phi^*}(\eta'\|\eta)
\Label{BK1+M} .
\end{align}

As a generalization of \cite[Theorem 3.3]{RISB},  
the following theorem discusses the convergence to the global minimum 
and the convergence speed.
\begin{theorem}\Label{TH1}
When any two densities $\theta$ and $\theta'$ in $\Theta$ 
satisfy the condition \eqref{BK1+}, and
the element \red{$\theta=\theta_* $} satisfies 
\begin{align}
D_\Omega (\theta\|\theta') \ge 0 
\Label{BK2+} 
\end{align}
with any element $\theta' $,
Algorithm \ref{AL1} satisfies the condition
\begin{align}
{\cal G}(\theta^{[t_0+1]})
-{\cal G}(\theta_{*})
\le 
\frac{\gamma D^\phi(\theta_{*}\| \theta^{[1]}) }{t_0} \Label{XME}
\end{align}
with any initial element $\theta^{[1]}$.
\end{theorem}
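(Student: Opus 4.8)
The plan is to establish the sublinear $O(1/t_0)$ convergence rate by combining a one-step improvement estimate with the Pythagorean relation \eqref{BVY} for the $e$-projection, in the spirit of the standard mirror-descent / Arimoto-Blahut convergence analysis. First I would fix a step $t$ and compare the Bregman divergence $D^\phi(\theta_*\|\theta^{[t]})$ to $D^\phi(\theta_*\|\theta^{[t+1]})$. The key algebraic device is to expand $D^\phi(\theta_*\|{\cal F}_\gamma(\theta^{[t]}))$ using the definition \eqref{BF6} of ${\cal F}_\gamma$, namely ${\cal F}_\gamma(\theta)=\theta-\frac1\gamma\Omega(\theta)$. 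Writing out \eqref{XZL} and collecting the terms linear in $\Omega(\theta^{[t]})$, I expect the cross term to reproduce exactly the quantity $D_\Omega(\theta_*\|\theta^{[t]})$ (up to the factor $1/\gamma$), which is where the objective-function difference will enter: note that by \eqref{form1} and \eqref{BK1+}, $D_\Omega(\theta\|\theta')$ measures precisely $\frac1\gamma$ times a difference of ${\cal G}$-values plus curvature corrections. The nonnegativity hypothesis \eqref{BK2+} at $\theta=\theta_*$ guarantees $D_\Omega(\theta_*\|\theta^{[t]})\ge 0$, which is what lets me drop an error term in the right direction.

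Second, I would invoke the Pythagorean identity \eqref{BVY} with $\overline\theta={\cal F}_\gamma(\theta^{[t]})$ and the $e$-projected point $\theta^{[t+1]}=\Pro^{(e),\phi}_{\mathcal M}({\cal F}_\gamma(\theta^{[t]}))$, and the point $\theta_*\in{\cal M}$:
\begin{align}
D^\phi(\theta_*\|{\cal F}_\gamma(\theta^{[t]}))
= D^\phi(\theta_*\|\theta^{[t+1]})
+ D^\phi(\theta^{[t+1]}\|{\cal F}_\gamma(\theta^{[t]})).
\end{align}
Since the last term is nonnegative, this yields $D^\phi(\theta_*\|\theta^{[t+1]})\le D^\phi(\theta_*\|{\cal F}_\gamma(\theta^{[t]}))$. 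Combining this with the expansion from the first step, and using the one-step descent bound implied by \eqref{BK1+} (which controls $D_\Omega(\theta^{[t]}\|\theta^{[t+1]})\le\gamma D^\phi(\theta^{[t]}\|\theta^{[t+1]})$ and thereby certifies monotone improvement as in Theorem \ref{NLT}), I aim to derive a telescoping inequality of the form
\begin{align}
{\cal G}(\theta^{[t+1]})-{\cal G}(\theta_*)
\le \gamma\bigl(D^\phi(\theta_*\|\theta^{[t]})-D^\phi(\theta_*\|\theta^{[t+1]})\bigr).
\end{align}

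Third, I would sum this telescoping inequality over $t=1,\ldots,t_0$. The right-hand side collapses to $\gamma\bigl(D^\phi(\theta_*\|\theta^{[1]})-D^\phi(\theta_*\|\theta^{[t_0+1]})\bigr)\le \gamma D^\phi(\theta_*\|\theta^{[1]})$ by nonnegativity of the Bregman divergence. On the left, monotone improvement of ${\cal G}$ along the iterates (guaranteed by Theorem \ref{NLT} under \eqref{BK1+}) gives ${\cal G}(\theta^{[t_0+1]})-{\cal G}(\theta_*)\le \frac1{t_0}\sum_{t=1}^{t_0}\bigl({\cal G}(\theta^{[t+1]})-{\cal G}(\theta_*)\bigr)$, and dividing by $t_0$ yields \eqref{XME}.

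The main obstacle I anticipate is the first step: cleanly extracting the objective-gap ${\cal G}(\theta^{[t+1]})-{\cal G}(\theta_*)$ from the Bregman algebra. The function ${\cal G}(\theta)=\sum_j\eta_j(\theta)\Omega^j(\theta)$ is bilinear-looking but $\eta_j(\theta)$ itself depends nonlinearly on $\theta$ through $\nabla^{(e)}[\phi]$, so the difference ${\cal G}(\theta^{[t+1]})-{\cal G}(\theta_*)$ does not factor through $D_\Omega$ directly. I expect to need the inequality \eqref{BK1+} precisely to absorb the residual second-order terms, converting the raw difference of $\eta\cdot\Omega$ products into the divergence expressions, and to rely on the convexity-type bound $D_\Omega(\theta_*\|\theta^{[t]})\ge0$ from \eqref{BK2+} to orient the remaining term correctly. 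Matching constants so that a single global $\gamma$ works simultaneously for the descent bound and the telescoping bound is the delicate bookkeeping I would check most carefully.
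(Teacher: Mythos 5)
Your plan is correct and follows essentially the same route as the paper's proof: the telescoping inequality ${\cal G}(\theta^{[t+1]})-{\cal G}(\theta_*)\le\gamma\bigl(D^\phi(\theta_*\|\theta^{[t]})-D^\phi(\theta_*\|\theta^{[t+1]})\bigr)$ is exactly the paper's Eq.~\eqref{XPZ2}, obtained there via Lemma~\ref{LLX} by expanding $D^\phi(\theta_*\|{\cal F}_\gamma(\theta^{[t]}))$, applying the Pythagorean identity \eqref{BVY} with $\overline\theta={\cal F}_\gamma(\theta^{[t]})$, and dropping the terms controlled by \eqref{BK1+} and \eqref{BK2+}, before summing with the monotonicity from Theorem~\ref{NLT}. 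The residual bookkeeping you flag as the main obstacle is precisely what the paper packages into Lemmas~\ref{L1} and~\ref{LLX}.
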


As explained in Section \ref{S5},
when the Bregman divergence $D^{\phi}(\theta\|\theta')$
is given as KL divergence,
our algorithm (Algorithm \ref{AL1}) coincides with the algorithm presented in \cite{Hmixture}.
Also, 
as explained in Section \ref{S8},
when the Bregman divergence $D^{\phi}(\theta\|\theta')$
is given as quantum relative entropy,
our algorithm (Algorithm \ref{AL1}) coincides with the algorithm presented in \cite{HL}.
%To keep this relation,

\begin{table}[t]
\caption{Notations related to objective function}
\label{notation-4}
\begin{center}
\begin{tabular}{|c|l|c|}
\hline
Symbol &Description &Eq. number    \\
\hline
\multirow{2}{*}{$\tilde{\cal G}$} &objective function as  & \multirow{2}{*}{Eq. \eqref{form2}}     \\
&a function of mixture parameter& \\
\hline
\multirow{2}{*}{$\tilde{\Omega}^j$} & function of  & \multirow{2}{*}{Eq. \eqref{form2}}     \\
&mixture parameter& \\
\hline
\multirow{2}{*}{${\cal G}$} &objective function as  & \multirow{2}{*}{Eq. \eqref{form1}}     \\
&a function of mixture parameter& \\
\hline
\multirow{2}{*}{${\Omega}^j$} & function of  & \multirow{2}{*}{Eq. \eqref{form1}}     \\
&natural parameter& \\
\hline
${\cal F}_\gamma $& 
conversion over natural parameter&
\eqref{BF6} \\
\hline
$D_\Omega(\theta\|\theta')$ &
two-input function of natural parameter
& \eqref{BK1+} \\
\hline
$\tilde{D}_{\tilde{\Omega}}(\eta\|\eta')$ &
two-input function of mixture parameter
&\eqref{BK1+M} 
\\
\hline
$\theta_{*}$ & minimizer of $G$ & \eqref{RBE}
\\
\hline
\end{tabular}
\end{center}
\end{table}

\subsection{Calculation of iteration process}
Here, we discuss how to execute the iteration process
$\theta^{[t+1]}:=
\Pro^{(e),\phi}_{\mathcal{M}} \circ {\cal F}_\gamma(\theta^{[t]})$
when the mixture family ${\cal M}$ is characterized by \eqref{const1B}.
In this case, 
${\cal F}_\gamma(\theta^{[t]})$ satisfies 
\begin{align}
{\cal F}_\gamma(\theta^{[t]})^j= (\theta^{[t]})^j-\frac{1}{\gamma}
\Omega^j(\theta(\eta_{[t]}))
\Label{SG1}
\end{align}
for $j=1, \ldots, d$.
Since $\Pro^{(e),\phi}_{\mathcal{M}}$ is an e-projection,
it does not change the initial $d_0$ parameters $\theta^1,\ldots,\theta^{d_0}$.
Hence, we have
\begin{align}
(\Pro^{(e),\phi}_{\mathcal{M}} \circ{\cal F}_\gamma(\theta^{[t]}))^j
= (\theta^{[t]})^j-\frac{1}{\gamma} \Omega^j(\theta(\eta_{[t]}))
\Label{SG2}
\end{align}
for $j=1, \ldots, d_0$.
However, the calculation of 
$(\Pro^{(e),\phi}_{\mathcal{M}} \circ{\cal F}_\gamma(\theta^{[t]}))^{d_0+i}$ with $i=1, \ldots, k(=d-d_0)$
is not so trivial, and 
these values are needed to calculate 
$\Omega^j(\theta(\eta_{[t+1]}))$
in the next iteration.
That is,
%In this case, to make the $e$-projection $\Pro^{(e),\phi}_{\mathcal{M}}$, 
we need to determine
the $k$ parameters $\theta^{d_0+1}, \ldots, \theta^{d}$
by solving the differential equation
\eqref{const1B} with respect to 
the $k$ parameters $\theta^{d_0+1}, \ldots, \theta^{d}$
with given the $d_0$ parameters
$\theta^{1}, \ldots, \theta^{d_0}$
When $k$ is larger, the calculation process of the $e$-projection is the bottleneck of our algorithm.
However, this number $k$ can be reduced to $1$ even in the general case as follows.

Since our parametrization satisfies \eqref{const1B},
$\eta_1,\ldots, \eta_{d_0}$ are free parameters
and 
$\eta_{d_0+1},\ldots, \eta_{d}$ are fixed to constants.
In this case, 
the function $\tilde{\cal G}$ is written as
\begin{align}
\tilde{\cal G}(\eta)=
\sum_{j=1}^{d_0} \eta_j \tilde{\Omega}^j(\eta)
+\sum_{j=d_0+1}^d \eta_j \tilde{\Omega}^j(\eta)
\Label{form3}.
\end{align}
When we define the new function 
$\overline{\Omega}^{d_0+1}(\eta):=
\sum_{j=d_0+1}^d \eta_j \tilde{\Omega}^j(\eta)$, 
the function $\tilde{\cal G}$ is rewritten as
\begin{align}
\tilde{\cal G}(\eta)=
\sum_{j=1}^{d_0} \eta_j \tilde{\Omega}^j(\eta)
+\overline{\Omega}^{d_0+1}(\eta)
\Label{form4}.
\end{align}
That is, our problem setting can be reduced to the case when $d=d_0+1$ and 
$\eta_{d_0+1}$ is constrained to be $1$.

In this case, there are several good choices for 
the function $\phi$ such that
the $e$-projection 
$\Pro^{(e),\phi}_{\mathcal{M}} (\theta) $
to ${\cal M}:= \{\eta \in \mathbb{R}^{d_0+1}|\eta_{d_0+1}=1\}$
can be easily calculated as follows.

\begin{example}\Label{Ex1}
As the first example, 
we choose where functions $f_1, \ldots,f_{d_0}$ on $\cX$ such that
$f_1, \ldots,f_{d_0}$ and the constant function
are linearly independent. 
For $\theta_{(d_0+1)} \in \mathbb{R}^{d_0+1}$, 
we define the convex function
\begin{align}
\phi_\kappa(\theta_{(d_0+1)}):=
\sum_{x \in \cX} e^{\sum_{j=1}^{d_0}f_j(x) \theta^j + \theta^{d_0+1}}.
\Label{BNX}
\end{align}
This example will be used in Section \ref{S5-B}.
Then, taking the partial derivative for $\theta^j$,
we have
\begin{align}
\eta_j(\theta)=&\frac{\partial \phi_\kappa}{\partial \theta ^{j}}(\theta)
=\sum_{x \in \cX} f_j(x) e^{\sum_{j'=1}^{d_0}f_{j'}(x) \theta^{j'} + \theta^{d_0+1}} \Label{B6K}\\
\eta_{d_0+1}(\theta)=&\frac{\partial \phi_\kappa}{\partial \theta ^{d_0+1}}(\theta)
=\sum_{x \in \cX} e^{\sum_{j'=1}^{d_0}f_{j'}(x) \theta^{j'} + \theta^{d_0+1}} \notag\\
&=\bigg(\sum_{x \in \cX} e^{\sum_{j'=1}^{d_0}f_{j'}(x) \theta^{j'}}\bigg)
e^{\theta^{d_0+1}} 
\end{align}
for $j=1, \ldots, d_0$.
Hence, 
the $e$-projection 
$\Pro^{(e),\phi_\kappa}_{\mathcal{M}} (\theta) $ 
%to ${\cal M}:= \{\eta \in \mathbb{R}^{d_0+1}|\eta_{d_0+1}=1\}$
is given as
\begin{align}
(\Pro^{(e),\phi_\kappa}_{\mathcal{M}} (\theta))^j &=\theta^j, \quad
(\Pro^{(e),\phi_\kappa}_{\mathcal{M}} (\theta))^{d_0+1} =
\log \phi_\kappa(\theta).
\end{align}
for $j=1, \ldots, d_0$.
\if0
\red{We also use the function 
$\bar{\phi}_\kappa(\bar{\theta}):=
\log\sum_{x \in \cX} e^{\sum_{j=1}^{d_0}f_j(x) \theta^j }$
with $\bar{\theta}=(\theta^1,\ldots,\theta^{d_0})$
.}
\red{
We define the probability distribution
$P_{\bar{\theta}}(x):= 
\sum_{x \in \cX} e^{\sum_{j=1}^{d_0}f_j(x) \theta^j -
\bar{\phi}_\kappa(\bar{\theta})}$,
we have
\begin{align*}
& D^{\phi_\kappa}(\theta_1\|\theta_2)
=
\sum_{j=1}^{d_0+1} \frac{\partial \phi_\kappa}{\partial \theta ^{j}}(\theta_1)
(\theta_1^j - \theta_2^j)
    - \phi_\kappa(\theta_1)+\phi_\kappa(\theta_2)   \\
=&
\sum_{j=1}^{d_0} 
\sum_{x \in \cX} f_j(x) e^{\sum_{j=1}^{d_0}f_j(x) \theta_1^j + \theta_1^{d_0+1}} 
%\frac{\partial \phi}{\partial \theta ^{j}}(\theta_1)
(\theta_1^j - \theta_2^j)
+
\bigg(\sum_{x \in \cX} e^{\sum_{j=1}^{d_0}f_j(x) \theta_1^j}\bigg)
e^{\theta_1^{d_0+1}} (\theta_1^{d_0+1} - \theta_2^{d_0+1})
    - \phi_\kappa(\theta_1)+\phi_\kappa(\theta_2)   \\
    =& (D(P_{\bar{\theta}_1}\|P_{\bar{\theta}_2}) +\bar{\phi}_\kappa(\bar{\theta}_1)
    -\bar{\phi}_\kappa(\bar{\theta}_2)  +(\theta_1^{d_0+1} - \theta_2^{d_0+1})  )
    e^{\bar{\phi}_\kappa(\bar{\theta}_1)+\theta_1^{d_0+1} }
      - \phi_\kappa(\theta_1)+\phi_\kappa(\theta_2) \\
    =& (D(P_{\bar{\theta}_1}\|P_{\bar{\theta}_2}) +\bar{\phi}_\kappa(\bar{\theta}_1)
    -\bar{\phi}_\kappa(\bar{\theta}_2)  +(\theta_1^{d_0+1} - \theta_2^{d_0+1})  )
\phi_\kappa(\theta_1)
      - \phi_\kappa(\theta_1)+\phi_\kappa(\theta_2) .
\end{align*}
In this example, 
when we choose $\gamma$ sufficiently largely,
the condition \eqref{BK1+M} holds on a compact subset ${\cal M}$.}
\fi
\end{example}

\begin{example}
As the second example, we choose $\phi(\theta):=\frac{1}{2}
\sum_{x \in \cX} (\sum_{j=1}^{d_0}f_j(x) \theta^j + \theta^{d_0+1})^2$.
Then, we have
\begin{align}
\frac{\partial \phi}{\partial \theta ^{j}}(\theta)
&=\sum_{x \in \cX} f_j(x) \bigg(\sum_{j'=1}^{d_0}f_{j'}(x) \theta^{j'} + \theta^{d_0+1}\bigg) \\
\frac{\partial \phi}{\partial \theta ^{d_0+1}}(\theta)
&=\sum_{x \in \cX} \bigg(\sum_{j'=1}^{d_0}f_{j'}(x) \theta^{j'} + \theta^{d_0+1}\bigg) \notag\\
&=\bigg(\sum_{x \in \cX} \sum_{j'=1}^{d_0}f_{j'}(x) \theta^{j'}\bigg)
 +|\cX|  \theta^{d_0+1}
\end{align}
for $j=1, \ldots, d_0$.
Hence, 
the $e$-projection 
$\Pro^{(e),\phi}_{\mathcal{M}} (\theta) $ 
%to ${\cal M}:= \{\eta \in \mathbb{R}^{d_0+1}|\eta_{d_0+1}=1\}$
is given as
\begin{align}
(\Pro^{(e),\phi}_{\mathcal{M}} (\theta))^j &=\theta^j, \\
(\Pro^{(e),\phi}_{\mathcal{M}} (\theta))^{d_0+1} &=
-\frac{1}{|\cX|}\bigg(\sum_{x \in \cX} \sum_{j=1}^{d_0}f_j(x) \theta^j\bigg)
\end{align}
for $j=1, \ldots, d_0$.
\end{example}

%The proof of Theorem \ref{TH1} is similar to 
%the same as 
%(9) of \cite{RISB}.

\section{Comparison with mirror descent}\Label{S4}
\begin{lemma}
When the condition \eqref{BK2+} holds for any $\theta,\theta'$,
the function $\tilde{\cal G}(\eta)$ is convex.
\end{lemma}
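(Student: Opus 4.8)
The plan is to reduce the statement to the mixture‑parameter coordinate and then recognize $\tilde{\cal G}$ as a supremum of affine functions. First I would pass condition \eqref{BK2+} from the $\theta$‑coordinate to the $\eta$‑coordinate. Using the identities $\eta_j(\theta)=\eta_j$ and $\Omega^j(\theta)=\tilde{\Omega}^j(\eta(\theta))$ from \eqref{form1}, the quantity $D_\Omega(\theta\|\theta')$ of \eqref{BK1+} is exactly $\tilde{D}_{\tilde{\Omega}}(\eta\|\eta')$ of \eqref{BK1+M} under the substitution $\eta=\eta(\theta)$, $\eta'=\eta(\theta')$. Since $\theta\mapsto\eta(\theta)$ is a bijection onto ${\cal D}=\Xi_{\cal M}$ and \eqref{BK2+} is assumed for all $\theta,\theta'$, this yields $\tilde{D}_{\tilde{\Omega}}(\eta\|\eta')\ge 0$ for all $\eta,\eta'\in{\cal D}$.

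Next I would expand the nonnegative quantity as
\begin{align}
\tilde{D}_{\tilde{\Omega}}(\eta\|\eta')
=\langle \eta,\tilde{\Omega}(\eta)\rangle-\langle \eta,\tilde{\Omega}(\eta')\rangle
=\tilde{\cal G}(\eta)-\langle \eta,\tilde{\Omega}(\eta')\rangle ,
\end{align}
so that $\tilde{D}_{\tilde{\Omega}}(\eta\|\eta')\ge 0$ reads $\tilde{\cal G}(\eta)\ge \langle \eta,\tilde{\Omega}(\eta')\rangle$ for every $\eta'\in{\cal D}$. Choosing $\eta'=\eta$ gives equality, hence the variational representation
\begin{align}
\tilde{\cal G}(\eta)=\sup_{\eta'\in{\cal D}}\langle \eta,\tilde{\Omega}(\eta')\rangle .
\end{align}

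Finally I would invoke the standard fact that a pointwise supremum of affine functions is convex: for each fixed $\eta'$ the map $\eta\mapsto\langle \eta,\tilde{\Omega}(\eta')\rangle$ is linear in $\eta$, so $\tilde{\cal G}$ is a supremum of a family of linear functions indexed by $\eta'\in{\cal D}$ and is therefore convex on the convex set ${\cal D}$.

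The argument is short, so the work is mostly in bookkeeping rather than in a single hard step. The one point to verify carefully is the coordinate change of the first paragraph: I should confirm that the index and derivative conventions make $D_\Omega(\theta\|\theta')$ and $\tilde{D}_{\tilde{\Omega}}(\eta\|\eta')$ coincide exactly under $\eta=\eta(\theta)$, so that \eqref{BK2+} holding for all $\theta,\theta'$ really transfers to all $\eta,\eta'\in{\cal D}$. A secondary subtlety is domain handling: since the supremum is taken over $\eta'\in{\cal D}$ and attained at $\eta'=\eta$, the representation is finite and valid on ${\cal D}$, and convexity of the supremum is asserted on this convex domain only.
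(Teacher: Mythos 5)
Your proof is correct, and it reaches the conclusion by a slightly different (and arguably cleaner) final step than the paper. Both arguments rest on the same key inequality: condition \eqref{BK2+}, transported to the mixture coordinate via $\Omega^j(\theta)=\tilde{\Omega}^j(\eta(\theta))$ and the bijection $\theta\mapsto\eta(\theta)$, gives exactly $\tilde{\cal G}(\eta)\ge\langle\eta,\tilde{\Omega}(\eta')\rangle$ for all $\eta,\eta'\in{\cal D}$. From there the paper rewrites this as the subgradient inequality $\tilde{\cal G}(\eta)\ge\tilde{\cal G}(\eta')+\sum_j(\eta_j-\eta'_j)\Omega^j(\theta(\eta'))$ (its Eq.~\eqref{ZNI}), specializes $\eta'$ to the convex combination $\lambda\eta+(1-\lambda)\eta''$, and adds the two resulting inequalities with weights $\lambda$ and $1-\lambda$ to verify the definition of convexity directly. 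You instead observe that equality holds at $\eta'=\eta$ (since $\tilde{\cal G}(\eta)=\langle\eta,\tilde{\Omega}(\eta)\rangle$ by \eqref{form2}), so that $\tilde{\cal G}(\eta)=\sup_{\eta'\in{\cal D}}\langle\eta,\tilde{\Omega}(\eta')\rangle$ is a pointwise supremum of linear functions of $\eta$, hence convex on the convex set ${\cal D}$. The two routes are equivalent in content --- your envelope representation immediately yields that $\tilde{\Omega}(\eta)$ is a subgradient at $\eta$, which is precisely what the paper extracts from \eqref{ZNI} and reuses in its Lemma on differentiability --- but your version avoids the explicit midpoint bookkeeping and makes the supporting-hyperplane structure of $\tilde{\cal G}$ transparent. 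Your two flagged caveats (the exactness of the coordinate change, and finiteness/attainment of the supremum on ${\cal D}$) are both handled correctly.
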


\begin{proof}
We choose any elements $\eta,\eta'' \in 
\Xi_{\cal M}$.
Then, given $\lambda \in (0,1)$, we choose 
the element $\eta'=\lambda \eta+(1-\lambda)\eta'' \in \Xi_{\cal M}$.
%Here, 
%$\eta_{d_0+1}', \ldots , \eta_{d}'$ are defined to the fixed values
%$\eta_{d_0+1}, \ldots , \eta_{d}$.
the condition \eqref{BK2+} for any $\theta=\theta(\eta),\theta'=\theta(\eta')$ implies
 \begin{align}
&\tilde{\cal G}(\eta)
\ge \sum_{j=1}^d \eta_j \Omega^j(\theta(\eta')) \notag\\
=& \sum_{j=1}^d \eta_j' \Omega^j(\theta(\eta'))
 +\sum_{j=1}^d (\eta_j-\eta_j') \Omega^j(\theta(\eta'))\notag\\
=&\tilde{\cal G}(\eta')
 +\sum_{j=1}^{d} (\eta_j-\eta_j') \Omega^j(\theta(\eta')).
\end{align}
%We choose another element $\eta ,\eta'' \in \Xi$, 
%and define $\eta_{d_0+1}'', \ldots , \eta_{d}''$ to the fixed values
%$\eta_{d_0+1}, \ldots , \eta_{d}$.
Therefore, we have
\begin{align}
\tilde{\cal G}(\eta) &\ge \tilde{\cal G}(\eta')
 +(1-\lambda)\sum_{j=1}^{d} (\eta_j-{\eta''}_j) \Omega^j(\theta(\eta'))\Label{ZNI}.
\end{align}
Replacing the role of $\eta$ by $\eta''$, we have
\begin{align}
\tilde{\cal G}(\eta'') &\ge \tilde{\cal G}(\eta')
 -\lambda\sum_{j=1}^{d} (\eta_j-{\eta''}_j) \Omega^j(\theta(\eta')).
\end{align}
Thus,
\begin{align}
\lambda \tilde{\cal G}(\eta)+(1-\lambda )\tilde{\cal G}({\eta''})
\ge \tilde{\cal G}(\eta').
\end{align}
\end{proof}

\begin{lemma}\Label{BVR}
When the condition \eqref{BK2+} holds for any $\theta,\theta'$,
%and $\tilde{\cal G}$ is differentiable,
we have
\begin{align}
\frac{\partial \tilde{\cal G}}{\partial \eta_j}(\eta)
=  \tilde{\Omega}^j(\eta)
=  \Omega^j(\theta(\eta)).\Label{ZXI}
\end{align}
\end{lemma}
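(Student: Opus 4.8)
The second equality in \eqref{ZXI} is nothing but the definition $\Omega(\theta) := \tilde{\Omega}(\eta(\theta))$ from \eqref{form1}, so the entire content lies in the first equality $\partial \tilde{\cal G}/\partial \eta_j = \tilde{\Omega}^j$. The plan is to differentiate $\tilde{\cal G}$ directly and then eliminate the unwanted cross term using hypothesis \eqref{BK2+}. Applying the product rule to $\tilde{\cal G}(\eta) = \sum_{k=1}^d \eta_k \tilde{\Omega}^k(\eta)$ and using $\partial \eta_k/\partial \eta_j = \delta_{kj}$ gives
\begin{align}
\frac{\partial \tilde{\cal G}}{\partial \eta_j}(\eta)
= \tilde{\Omega}^j(\eta)
+ \sum_{k=1}^d \eta_k \frac{\partial \tilde{\Omega}^k}{\partial \eta_j}(\eta),
\Label{prodrule}
\end{align}
so it suffices to show that the second term vanishes identically in $j$.

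The key observation is that, rewritten in the mixture parameter as in \eqref{BK1+M}, the hypothesis \eqref{BK2+} states $\sum_k \eta_k\big(\tilde{\Omega}^k(\eta) - \tilde{\Omega}^k(\eta')\big) \ge 0$ for every $\eta'$, which is exactly the inequality $\tilde{\cal G}(\eta) \ge \sum_k \eta_k \tilde{\Omega}^k(\eta')$ already isolated in the proof of the preceding lemma. Fixing $\eta \in \Xi_{\cal M}$ and setting $h(\eta') := \sum_{k=1}^d \eta_k \tilde{\Omega}^k(\eta')$, this reads $h(\eta') \le h(\eta)$ for all $\eta'$, so $\eta' = \eta$ is a global maximizer of $h$ over the domain. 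Since $\Xi_{\cal M}$ is open, the maximizer is interior, and the first-order stationarity condition yields
\begin{align}
0 = \frac{\partial h}{\partial \eta'_j}(\eta)
= \sum_{k=1}^d \eta_k \frac{\partial \tilde{\Omega}^k}{\partial \eta_j}(\eta)
\end{align}
for each $j$. Substituting this into \eqref{prodrule} leaves precisely $\partial \tilde{\cal G}/\partial \eta_j = \tilde{\Omega}^j$, as claimed. Equivalently, one may phrase the argument through the envelope theorem applied to $\tilde{\cal G}(\eta) = \max_{\eta'} h_\eta(\eta')$, differentiating only the explicit linear dependence of $h_\eta$ on the outer copy of $\eta$; the stationarity in $\eta'$ makes the two formulations identical.

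I expect the main obstacle to be regularity rather than algebra. The computation in \eqref{prodrule} and the first-order condition both presuppose that $\tilde{\Omega}$ is differentiable, so that the cross term $\sum_k \eta_k\, \partial \tilde{\Omega}^k/\partial \eta_j$ is even meaningful, and that the maximizer $\eta' = \eta$ is genuinely interior, so that stationarity holds rather than a one-sided boundary inequality. Both are available here because $\Xi_{\cal M}$ is open and $\tilde{\Omega}$ is assumed smooth enough for the minimization \eqref{RBE2} to be well posed; I would state these standing smoothness and interiority assumptions explicitly just before invoking the first-order condition, since they are what turns the global-maximum characterization of \eqref{BK2+} into the pointwise vanishing of the cross term.
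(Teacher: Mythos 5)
Your argument is correct as far as it goes, but it takes a genuinely different route from the paper and quietly strengthens the hypotheses. The paper's proof is a two-line appeal to convex analysis: the subgradient inequality $\tilde{\cal G}(\eta)\ge \tilde{\cal G}(\eta')+\sum_j(\eta_j-\eta'_j)\tilde{\Omega}^j(\eta')$, already isolated as \eqref{ZNI} in the proof of the preceding convexity lemma, shows that $\tilde{\Omega}(\eta)\in\partial\tilde{\cal G}(\eta)$, and then Lemma \ref{L4-11} (quoted from \cite{HSF1}) upgrades a single-valued \emph{continuous} selection of the subdifferential of a convex function to genuine differentiability of $\tilde{\cal G}$ with $\nabla\tilde{\cal G}=\tilde{\Omega}$. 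Your product-rule-plus-stationarity computation reaches the same identity and is an instructive envelope-theorem reading of the lemma: condition \eqref{BK2+} says $\eta'\mapsto\sum_k\eta_k\tilde{\Omega}^k(\eta')$ is maximized at $\eta'=\eta$, so the cross term $\sum_k\eta_k\,\partial\tilde{\Omega}^k/\partial\eta_j$ vanishes by first-order stationarity at an interior maximizer. The price is exactly the regularity issue you flag at the end: your argument needs $\tilde{\Omega}$ to be differentiable, whereas the paper's setup assumes only that $\tilde{\Omega}$ is a (single-valued, continuous) map, and the differentiability of $\tilde{\cal G}$ is the \emph{conclusion} of Lemma \ref{BVR}, not an input --- this is precisely what lets the paper later describe its scheme as a non-differentiable extension of mirror descent. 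So your proof establishes the lemma only under an added $C^1$ hypothesis on $\tilde{\Omega}$ that the paper deliberately avoids; to obtain the statement as written you should replace the stationarity step by the subdifferential argument via Lemma \ref{L4-11}, or else state the extra smoothness assumption in the lemma itself.
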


To prove Lemma \ref{BVR}, we describe Lemma 1 of \cite{HSF1} in our notation.
For a convex function $\tilde{\cal G}: \Xi \subset 
\mathbb{R}^{d}
\to \mathbb{R}$,
the subdifferential of $\tilde{\cal G}$ at $\eta \in \Xi$ is defined as
\begin{align}
\partial \tilde{\cal G}(\eta):=
\{v \in \mathbb{R}^{d} | \tilde{\cal G}(\eta')\ge \tilde{\cal G}(\eta)
+\langle v,\eta-\eta'\rangle , \forall \eta' \in \Xi \}.
\end{align}
An element of the subdifferential $v \in  \partial \tilde{\cal G}(\eta)$ is called the subgradient of $\tilde{\cal G}$ at $\eta$.
In the following, we denote the interior of 
$\Xi$ by $\int (\Xi)$.

\begin{lemma}[\protect{\cite[Lemma 1]{HSF1}}]\Label{L4-11}
Consider a convex function 
$\tilde{\cal G}: \Xi \subset 
\mathbb{R}^{d} \to \mathbb{R}$.
If there exists a single-valued continuous
operator $\tilde{\Omega} : 
\int (\Xi) \to \mathbb{R}^{d}$
such that $\tilde{\Omega}(\eta)\in  \partial \tilde{\cal G}(\eta)$ 
for any element $\eta \in \int(\Xi) $,
 then $\tilde{\cal G}$ is differentiable
on $\int \Xi$ and $\nabla \tilde{\cal G}(\eta) = 
\tilde{\Omega}(\eta)$ for any $\eta \in \int (\Xi)$.
\end{lemma}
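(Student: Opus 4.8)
The plan is to establish Fréchet differentiability directly at an arbitrary interior point and to read the gradient off the selection $\tilde{\Omega}$, rather than to argue abstractly that the subdifferential reduces to a singleton. Fix $\eta_0 \in \int(\Xi)$ and let $h \in \mathbb{R}^{d}$ be small enough that $\eta_0 + h \in \int(\Xi)$; this is possible since $\int(\Xi)$ is open. Convexity of $\tilde{\cal G}$ is a hypothesis, so the whole argument reduces to controlling the first-order remainder $r(h) := \tilde{\cal G}(\eta_0 + h) - \tilde{\cal G}(\eta_0) - \langle \tilde{\Omega}(\eta_0), h \rangle$ and proving $r(h) = o(\|h\|)$ as $h \to 0$.

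First I would extract a lower bound from the hypothesis $\tilde{\Omega}(\eta_0) \in \partial\tilde{\cal G}(\eta_0)$. The defining subgradient inequality, evaluated at the comparison point $\eta_0 + h$, gives $\tilde{\cal G}(\eta_0 + h) - \tilde{\cal G}(\eta_0) \ge \langle \tilde{\Omega}(\eta_0), h \rangle$, that is $r(h) \ge 0$. This uses only convexity and the selection property, and supplies one side of a sandwich.

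Next I would produce the matching upper bound from the subgradient at the perturbed point. Since $\tilde{\Omega}(\eta_0 + h) \in \partial\tilde{\cal G}(\eta_0 + h)$, evaluating its subgradient inequality at the comparison point $\eta_0$ yields $\tilde{\cal G}(\eta_0 + h) - \tilde{\cal G}(\eta_0) \le \langle \tilde{\Omega}(\eta_0 + h), h \rangle$. Subtracting $\langle \tilde{\Omega}(\eta_0), h \rangle$ and combining with the lower bound sandwiches the remainder:
\begin{align}
0 \le r(h) \le \langle \tilde{\Omega}(\eta_0 + h) - \tilde{\Omega}(\eta_0), h \rangle \le \| \tilde{\Omega}(\eta_0 + h) - \tilde{\Omega}(\eta_0) \| \, \| h \|,
\end{align}
where the final inequality is Cauchy--Schwarz.

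Finally, dividing by $\|h\|$ and letting $h \to 0$, continuity of $\tilde{\Omega}$ at $\eta_0$ forces $\| \tilde{\Omega}(\eta_0 + h) - \tilde{\Omega}(\eta_0) \| \to 0$, hence $r(h)/\|h\| \to 0$. This is precisely Fréchet differentiability of $\tilde{\cal G}$ at $\eta_0$ with $\nabla \tilde{\cal G}(\eta_0) = \tilde{\Omega}(\eta_0)$, and single-valuedness of $\tilde{\Omega}$ makes this gradient unambiguous; since $\eta_0 \in \int(\Xi)$ was arbitrary, the conclusion holds throughout $\int(\Xi)$. I expect the upper bound to be the crux: convexity alone delivers only the one-sided estimate $r(h) \ge 0$, and it is precisely the continuity of the subgradient selection $\tilde{\Omega}$ --- not merely its existence or single-valuedness --- that closes the gap and upgrades the one-sided convex inequality to two-sided differentiability. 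The only routine point to verify is that $\eta_0 + h$ remains in $\int(\Xi)$ for small $h$, which is immediate from openness.
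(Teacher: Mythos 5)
Your proof is correct. Note, however, that the paper does not prove this lemma at all: it is imported by citation from \cite[Lemma 1]{HSF1}, so there is no in-paper argument to compare against. Your sandwich argument is the standard one for this classical fact (a continuous single-valued selection of the subdifferential of a convex function forces differentiability): the lower bound $r(h)\ge 0$ from the subgradient at $\eta_0$, the upper bound $r(h)\le \langle \tilde{\Omega}(\eta_0+h)-\tilde{\Omega}(\eta_0),h\rangle$ from the subgradient at $\eta_0+h$ evaluated back at $\eta_0$, Cauchy--Schwarz, and continuity of $\tilde{\Omega}$; all steps are sound and the identification of continuity (rather than mere single-valuedness) as the crux is exactly right. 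One cosmetic caveat: the paper's displayed definition of $\partial\tilde{\cal G}(\eta)$ writes $\langle v,\eta-\eta'\rangle$ where the standard convention (and the one consistent with the conclusion $\nabla\tilde{\cal G}=\tilde{\Omega}$ and with the paper's own use of the relation preceding \eqref{ZNI}) is $\langle v,\eta'-\eta\rangle$; you have implicitly used the intended convention, which is the correct reading.
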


\begin{proofof}{Lemma \ref{BVR}}
The relation \eqref{ZNI} guarantees that 
$  (\tilde{\Omega}^j(\eta))_{j=1}^d =  (\Omega^j(\theta(\eta)))_{j=1}^d$
belongs to $\partial \tilde{\cal G}(\eta)$.
Hence, Lemma \ref{L4-11} implies \eqref{ZXI}.
\end{proofof}

Now, we assume that our parametrization satisfies \eqref{const1B}.
Then, we describe the mirror descent algorithm as Algorithm \ref{AL2} \cite[Section 4.2]{Bubeck}, \cite[Algorithm 1]{HSF1}, \cite[(3.11)]{BT}.
Algorithm \ref{AL2} 
employs only the mixture parameter.
The equation \eqref{CGU} shows the relation between 
$\nabla^{(e)} [\phi]({\cal M})$ and $\Xi_{{\cal M}}$.

\begin{algorithm}
\caption{mirror descent algorithm for $\tilde{\cal G}(\eta)$}
\Label{AL2}
\begin{algorithmic}
\STATE {Choose the initial value 
$\eta_{[1]} \in \nabla^{(e)} [\phi]({\cal M})
\subset \mathbb{R}^d$;} 
\REPEAT 
%\STATE {Set $\hat{\theta}=\hat{\theta}^{[t]} \in \Theta_{\mathcal{M}}$;} 
\STATE Calculate $\eta_{[t+1]}:=
\argmin_{\eta \in \Xi_{\cal M}}
\sum_{j=1}^{d_0} \eta_j \frac{\partial \tilde{\cal G}}{\partial \eta_j}(\eta_{[t]})
+\frac{1}{\beta} D^\phi( \theta(\eta) \| \theta(\eta_{[t]}))$;
\UNTIL{convergence.} 
\end{algorithmic}
\end{algorithm}

\begin{theorem}\Label{TH6}
When the condition \eqref{BK2+} holds for any $\theta,\theta'$,
%and $\tilde{\cal G}$ is differentiable,
our algorithm %\red{with $k=0$}
is the same as 
the mirror descent algorithm.
\end{theorem}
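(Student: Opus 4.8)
The plan is to show that the single iteration map of Algorithm~\ref{AL1}, when written in the mixture coordinate $\eta$, coincides with the minimizer defining one step of Algorithm~\ref{AL2}. The key observation is that Lemma~\ref{BVR} already supplies the bridge: under condition \eqref{BK2+}, the objective $\tilde{\cal G}$ is differentiable with $\frac{\partial \tilde{\cal G}}{\partial \eta_j}(\eta) = \Omega^j(\theta(\eta))$. So the linear term appearing in the mirror-descent subproblem is exactly the vector $\Omega(\theta(\eta_{[t]}))$, the same object used to build the conversion map ${\cal F}_\gamma$ in \eqref{BF6}. First I would set $\beta = 1/\gamma$ so the two step-size conventions align, and rewrite the mirror-descent update using this identification.

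Second, I would analyze the mirror-descent minimization $\argmin_{\eta\in\Xi_{\cal M}} \sum_{j=1}^{d_0}\eta_j \Omega^j(\theta(\eta_{[t]})) + \frac{1}{\beta} D^\phi(\theta(\eta)\|\theta(\eta_{[t]}))$ through its first-order optimality condition. Writing out $\nabla^{(e)}$ of $D^\phi(\theta(\eta)\|\theta(\eta_{[t]}))$ with respect to the free parameters, and using that under \eqref{const1B} the coordinates $\eta_{d_0+1},\dots,\eta_d$ are fixed constants so only $\eta_1,\dots,\eta_{d_0}$ are varied, the stationarity equations should reduce to a statement about the natural coordinates $\theta^j$. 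Concretely, differentiating $D^\phi(\theta(\eta)\|\theta(\eta_{[t]}))$ in $\theta$ produces $\theta^j - (\theta_{[t]})^j$ (this is the standard gradient identity for the Bregman term, obtainable from \eqref{XZL} and the duality \eqref{M1}--\eqref{M2}), so the optimality condition becomes $\theta^j = (\theta_{[t]})^j - \frac{1}{\gamma}\Omega^j(\theta(\eta_{[t]}))$ for $j=1,\dots,d_0$.

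Third, I would compare this with the explicit description of Algorithm~\ref{AL1}'s step in \eqref{SG2}, which states that $(\Pro^{(e),\phi}_{\mathcal{M}}\circ{\cal F}_\gamma(\theta^{[t]}))^j = (\theta^{[t]})^j - \frac{1}{\gamma}\Omega^j(\theta(\eta_{[t]}))$ for $j=1,\dots,d_0$. These are the same $d_0$ equations. Since the $e$-projection fixes the first $d_0$ natural coordinates and the remaining $k$ coordinates are then uniquely determined by the constraint \eqref{const1B} (equivalently, by membership in ${\cal M}$), matching the first $d_0$ coordinates matches the full point in ${\cal M}$. Hence the two iterations produce identical outputs.

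The main obstacle I anticipate is the careful handling of the constrained gradient: the mirror-descent subproblem minimizes over $\eta\in\Xi_{\cal M}$, which is a $d_0$-dimensional free set, yet the Bregman term $D^\phi(\theta(\eta)\|\cdot)$ is most naturally differentiated in the full $d$-dimensional $\theta$. I would have to justify that restricting to the free mixture parameters $\eta_1,\dots,\eta_{d_0}$ (with $\eta_{d_0+1},\dots,\eta_d$ held at the constants $c_j$) is exactly what projects the stationarity condition onto its first $d_0$ components, and that this is consistent with the $e$-projection's characterization via the Pythagorean relation \eqref{BVY}. Verifying that the gradient of the Bregman term in these partial coordinates yields precisely $\theta^j-(\theta_{[t]})^j$ — rather than a more complicated expression involving the dependent coordinates $\theta^{d_0+j}(\theta_{(d_0)})$ — is the delicate computational heart of the argument, and is where I would invest the most care.
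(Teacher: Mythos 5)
Your proposal follows essentially the same route as the paper's proof: identify the linear term of the mirror-descent subproblem via Lemma \ref{BVR}, write its first-order optimality condition in the free mixture coordinates to obtain $\theta^j(\eta_{[t+1]})=\theta^j(\eta_{[t]})-\beta\,\Omega^j(\theta(\eta_{[t]}))$ for $j=1,\ldots,d_0$, and match this with \eqref{SG2} under $\beta=1/\gamma$. One small correction to the ``delicate computational heart'' you flag: the identity giving $\theta^j(\eta)-\theta^j(\eta_{[t]})$ comes from differentiating the Bregman term with respect to the mixture parameter $\eta_j$ (i.e., via the dual form $D^{\phi^*}(\eta_{[t]}\|\eta)$ and \eqref{du2}), not ``in $\theta$'' as written --- differentiation in the natural parameter would instead produce a Hessian contraction $\sum_k \partial_j\partial_k\phi(\theta)\,(\theta^k-\theta_{[t]}^k)$.
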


Therefore, our algorithm can be considered as
a special case when 
the condition \eqref{BK2+} holds
and $\tilde{\cal G}$ is differentiable.
However, when the conditions of Theorem \ref{TH1} hold,
our algorithm has the convergence to the global minimum
even when $\tilde{\cal G}$ is differentiable.
Hence, our algorithm can be considered as a non-differentiable extension of 
the mirror descent algorithm.

Further, even when the condition \eqref{BK2+} does not hold,
Theorem \ref{NLT} guarantees that 
our algorithm monotonically decreases the objective function.
In fact, as numerically demonstrated in \cite{HL},
our algorithm has a relatively good performance.
Therefore, our algorithm can be used for a wider situation than the mirror descent algorithm
as long as the objective function has the form
\eqref{form1}.

\begin{proofof}{Theorem \ref{TH6}}
Since our parametrization satisfies \eqref{const1B},
$\eta_1, \ldots, \eta_{d_0}$ are free parameters
and $\eta_{d_0+1}, \ldots , \eta_{d}$ are defined to the fixed values.
%$\eta_{d_0+1}, \ldots , \eta_{d}$.
Then, in the mirror descent algorithm,
the function of $\eta \in \Xi_{\cal M}$
to be minimized at the determination of 
$\eta_{[t+1]}$ is calculated by using Lemma \ref{BVR} as
\begin{align}
&\sum_{j=1}^{d_0} \eta_j \frac{\partial \tilde{\cal G}}{\partial \eta_j}(\eta_{[t]})
+\frac{1}{\kappa} D^\phi( \theta(\eta) \| \theta(\eta_{[t]}))\notag\\
=&
\sum_{j=1}^{d_0} \eta_j \Omega^j(\theta(\eta_{[t]}))
+\frac{1}{\kappa} \Big(
\sum_{j=1}^{d_0} \theta^j(\eta_{[t]})(\eta_{[t],j}-\eta_j) \notag\\
&+\phi^*(\eta)-\phi^*(\eta_{[t]}) \Big).
\end{align}
Here, we use the fact $\eta_{d_0+i}= \eta_{[t],d_0+i}$
with $i=1, \ldots, k$ for $\eta \in \Xi_{\cal M}$.
The partial derivative of the above value with respect to $\eta_j$ 
with $j=1, \ldots, d_0$
is the following condition for $\eta \in \Xi_{\cal M}$:
\begin{align}
&\Omega^j(\theta(\eta_{[t]}))-\frac{1}{\kappa} \theta^j(\eta_{[t]})
+\frac{1}{\kappa} \frac{\partial \phi^*}{\partial \eta_j}(\eta) \notag\\
%=&
%\Omega^j(\theta(\eta^{[t]}))-\frac{1}{\kappa} \theta^j(\eta^{[t]})
%+\frac{1}{\kappa} \frac{\partial \phi^*}{\partial \eta_j}(\eta) \notag\\
=&
\Omega^j(\theta(\eta_{[t]}))-\frac{1}{\kappa} \theta^j(\eta_{[t]})
+\frac{1}{\kappa} \theta^j(\eta) .
\end{align}
That is, in the mirror descent algorithm,
$\eta_{[t+1]}\in \Xi_{\cal M}$ is chosen to be the element to satisfy the following condition.
\begin{align}
 \theta^j(\eta_{[t+1]})=
 \theta^j(\eta_{[t]})-\beta\Omega^j(\theta(\eta_{[t]}))
\end{align}
for $j=1, \ldots, d_0$, which coincides with the condition 
\eqref{SG2}.
When $\beta$ is chosen as $\frac{1}{\gamma}$,
the mirror descent algorithm coincides with our algorithm.
%Our algorithm coincides with mirror descent algorithm.
%????????
\end{proofof}

\section{Application to mixture family of probability distributions}\Label{S5}
\subsection{Formulation}
We apply our method
to the case when ${\cal M}$ is given as a mixture family of probability distributions.
We consider a finite sample space
${\cal X}$ where the cardinality of $\cX$ is $d'$.
We introduce $d'-1$ functions $f_1,\ldots, f_{d'-1}$ over $\cX$
such that 
$f_{1}, \ldots, f_{d'-1}$ and the constant function are linearly independent.
We also choose 
$d'$ functions $g^1, \ldots, g^{d'}$ on $\cX$ such that
\begin{align}
\sum_{x \in \cX} f_{j}(x)g^i(x)=\delta^i_{j},
\end{align}
where $f_{d'}(x):=1$.
We define a mixture family ${\cal M}_p$
as the set of distributions to satisfy the linear constraints
\begin{align}
\sum_{x \in \cX}P_X(x)f_j(x)=c_j \hbox{ for }j=d_0+1, \ldots, d'-1.\Label{BNI}
\end{align}
Then, we have two kinds of parametrization of the distribution on $\cX$.
Using the natural parameter $\theta \in \mathbb{R}^{d'-1}$,
we parameterize the distribution as
$P_\theta(x):= 
e^{\sum_{j=1}^{d'-1} f_j(x) \theta^j-\theta^{d'}(\theta)}$,
where $\theta^{d'}(\theta):=-\log 
\big(\sum_{x'\in \cX} e^{\sum_{j=1}^{d'-1} f_j(x') \theta^j}\big)$.
Using the mixture parameter $\eta \in \mathbb{R}^{d'-1}$,
we parameterize the distribution as
$\tilde{P}_\eta(x):=\sum_{j=1}^{d'-1}
\eta_j g^j(x)+ g^{d'}(x)$.
%, where $\eta_d(\eta)$ is defined to be $
%1-\sum_{j=1}^{d-1} \eta_j \sum_{x\in \cX}g^j(x)$.
The distribution $\tilde{P}_\eta$ belongs to 
${\cal M}_p$
if and only if
$\eta_{j}=c_j$ for $j=d_0+1, \ldots, d'-1$.

Then, we introduce a function $\Omega_p[P_X]$ on $\cX$, which depends on 
the distribution $P_X \in {\cal M}_p$.
We define an objective function
${\cal G}_p(P_X):=\sum_{x \in \cX}P_X(x) \Omega_p[P_X](x)$, and
consider the minimization problem
\begin{align}
\min_{P_X \in {\cal M}_p} {\cal G}_p(P_X).\Label{BNY}
\end{align}
Using the Kullback-Leibler divergence
$D(P_X\|Q_X):=\sum_{x \in {\cal X}} P_X(x)(\log P_X(x)-\log Q_X(x))
$,
the paper \cite{Hmixture} introduced an algorithm
under the condition
\begin{align}
\sum_{x \in \cX}P_X(x) (\Omega_p[P_X](x)-\Omega_p[Q_X](x))
\le \gamma D(P_X\|Q_X).
\end{align}
The previous algorithm \cite{Hmixture} coincides with our algorithm 
under the following choices.

We define the function $\phi(\theta,\theta^{d'})$ on $\mathbb{R}^{d'}$ as 
\begin{align}
\phi(\theta,\theta^{d'})=\sum_{x\in \cX} e^{\sum_{j=1}^{d'-1} f_j(x) \theta^j+\theta^{d'}} .
\Label{BCR}
\end{align}
We choose the mixture family ${\cal M}$ as
\begin{align}
{\cal M} 
:= \Bigg\{(\theta,\theta^{d'}) \in \mathbb{R}^{d'}
\Bigg| 
\begin{array}{l}
\frac{\partial \phi}{\partial \theta^j}(\theta,\theta^{d'})
=c_j \\
\frac{\partial \phi}{\partial \theta^{d}}(\theta,\theta^{d'})
=1 
\end{array}
\Bigg\},
\end{align}
where the index $j$ in the above condition
takes values $d_0+1, \ldots d'-1$.
The relation $P_\theta \in {\cal M}_p$ holds 
if and only if $(\theta,\theta^{d'}(\theta)) 
\in {\cal M}$.
In other words,
the relation $\tilde{P}_\eta \in {\cal M}_p$ holds 
if and only if $
%(\frac{\partial \phi}{\partial \theta^{j}}(\theta,\theta^d(\theta)))_{j=1}^d
(\eta,1) \in \Xi_{\cal M}$.
For 
%$\eta^{(d_0)}, {\eta'}^{(d_0)}$ such that $(\eta^{(d_0)},c,1),({\eta'}^{(d_0)},c,1)
$\eta,\eta'$ such that 
$(\eta,1),(\eta',1)\in \Xi_{\cal M} $,
we choose
\begin{align}
\tilde{\Omega}^j(\eta):=
\sum_{x\in \cX}g^j(x) \Omega_p[\tilde{P}_\eta](x)
\hbox{ for }j=1, \ldots, d
\end{align}
and have
\begin{align}
{\cal G}_p(\tilde{P}_\eta)
&= \tilde{\cal G}(\eta):=
\sum_{j=1}^{d'-1} \eta_j\tilde{\Omega}^j(\eta) 
+\tilde{\Omega}^d(\eta) \Label{O1}\\
&=\sum_{x \in \cX}\tilde{P}_\eta(x) 
\Omega_p[\tilde{P}_{\eta}](x),
%&=\sum_{j=1}^d \eta_j\tilde{\Omega}^j(\eta') 
\Label{O2}
\\
D(\tilde{P}_\eta\|\tilde{P}_{\eta'})&=
D^{\phi^*}((\eta',1)\|(\eta,1)).
\Label{O3}
\end{align}

Therefore, the algorithm in the previous paper \cite{Hmixture}
coincides with our algorithm with 
the objective function $ \tilde{\cal G}$ defined in \eqref{O1}
and the Bregman Divergence system 
based on the convex function $\phi$ defined by \eqref{BCR}.
%the above choice of $\phi$.
However, the previous algorithm and our algorithm of the above choice 
have the process for the $e$-projection 
$\Pro^{(e),\phi}_{\mathcal{M}} (\theta) $.
Its step needs to solve a convex minimization with $d-d_0-1$
parameters. 

\subsection{Minimization-free-iteration algorithm}\Label{S5-B}
To avoid the convex minimization in each iteration, 
for $\eta^{(d_0)} \in \mathbb{R}^{d_0}$, 
using $c=(c_j)_{j=d_0+1}^{d'-1}$,
we propose a minimization-free-iteration algorithm.
For this aim, we define the function $\overline{\cal G}_\kappa$ as
\begin{align}
\overline{\cal G}_\kappa(\eta^{(d_0)}):=&
\sum_{j=1}^{d_0} \eta_j \overline{\Omega}^j
(\eta^{(d_0)})
+\overline{\Omega}^{d_0+1}(\eta^{(d_0)})
\Label{form9},\\
\overline{\Omega}^j(\eta^{(d_0)}):=&
\tilde{\Omega}^j(\eta^{(d_0)},c),\Label{ZXC1}\\
\overline{\Omega}^{d_0+1}(\eta^{(d_0)}):=&
\sum_{j=d_0+1}^{d'-1} c_j \tilde{\Omega}^j(\eta^{(d_0)},c)
%\\&
+\tilde{\Omega}^{d'}(\eta^{(d_0)},c).\Label{ZXC2}
\end{align}
In the following, we use the natural parameter
$\theta_{(d_0)} \in \mathbb{R}^{d_0}$.
Then, we recall 
the convex function
$\phi_\kappa(\theta_{(d_0)})$, i.e.,
the special case studied in Example \ref{Ex1}.
Our mixture family is given as
\begin{align}
{\cal M}_\kappa
:= \Big\{\theta \in \mathbb{R}^{d_0+1}\Big| 
\frac{\partial \phi_\kappa}{\partial \theta^{d_0+1}}
=1 \Big\}.\Label{BNB}
\end{align}

Since
\begin{align}
&\tilde{\cal G}(\eta^{(d_0)},
c)=
\overline{\cal G}_\kappa(\eta^{(d_0)}), 
\Label{OY1}
\end{align}
the minimization \eqref{BNY} can be written 
as $\min_{\eta^{(d_0)}\in \mathbb{R}^{d_0}}\overline{\cal G}_\kappa(\eta^{(d_0)})$.
Hence, we discuss the minimization 
$\min_{\eta^{(d_0)} \in \mathbb{R}^{d_0}} \overline{\cal G}_\kappa(\eta^{(d_0)})$
by using the algorithm with
the Bregman divergence system 
based on the convex function $\phi_\kappa$ defined by \eqref{BNX}.
In addition, we have the following lemma.
\begin{lemma}
When any two elements $P_X ,Q_X\in {\cal M}_p$ satisfy
\begin{align}
\sum_{x \in \cX}P_X(x) (\Omega_p[P_X](x)-\Omega_p[Q_X](x))
\ge 0,
\end{align}
any elements 
$\eta^{(d_0)}, {\eta'}^{(d_0)}\in \mathbb{R}^{d_0}$ satisfy
\begin{align}
D_{\overline{\Omega}}((\eta^{(d_0)},1)\| 
({\eta'}^{(d_0)},1))
\ge 0.\Label{MCY}
\end{align}
\end{lemma}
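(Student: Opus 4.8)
The goal is to show that a nonnegativity hypothesis on the original object $\Omega_p$ over the mixture family ${\cal M}_p$ transfers to a nonnegativity statement for the reduced quantity $D_{\overline{\Omega}}$ attached to the convex function $\phi_\kappa$ with the single constraint $\eta_{d_0+1}=1$. The essential idea is that the two objective-function formulations must agree, because the reduction from $d'-1$ linear constraints to a single constraint is exactly the bookkeeping performed in \eqref{form9}--\eqref{ZXC2} and summarized in \eqref{OY1}. So the proof should be a matter of unfolding definitions and matching terms, not of a genuine inequality argument; the inequality is simply inherited.

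\textbf{Step 1: Unfold $D_{\overline\Omega}$.}
By the definition \eqref{BK1+} applied to the reduced system (with $d$ replaced by $d_0+1$ and $\Omega$ replaced by $\overline\Omega$), I would write
\begin{align}
D_{\overline{\Omega}}((\eta^{(d_0)},1)\| ({\eta'}^{(d_0)},1))
=\sum_{j=1}^{d_0} \eta_j
\big(\overline{\Omega}^j(\eta^{(d_0)})- \overline{\Omega}^j({\eta'}^{(d_0)})\big)
+\big(\overline{\Omega}^{d_0+1}(\eta^{(d_0)})-\overline{\Omega}^{d_0+1}({\eta'}^{(d_0)})\big),
\end{align}
where the last coordinate contributes with weight $\eta_{d_0+1}=1$. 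This is the quantity I must show is nonnegative.

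\textbf{Step 2: Substitute \eqref{ZXC1} and \eqref{ZXC2} and recognize the full sum.}
Next I would replace $\overline{\Omega}^j$ by $\tilde{\Omega}^j(\eta^{(d_0)},c)$ and $\overline{\Omega}^{d_0+1}$ by $\sum_{j=d_0+1}^{d'-1} c_j \tilde{\Omega}^j(\eta^{(d_0)},c)+\tilde{\Omega}^{d'}(\eta^{(d_0)},c)$. Because the mixture family ${\cal M}_p$ fixes $\eta_j=c_j$ for $j=d_0+1,\ldots,d'-1$ and $\eta_{d'}$ is the normalization index fixed to $1$, the weights in the reduced expression are precisely the weights $\eta_j$ of the full point $(\eta^{(d_0)},c)\in\Xi_{\cal M}$. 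Collecting the $d_0$ free terms together with the fixed-coordinate terms, the reduced difference reassembles into the full difference
\begin{align}
D_{\overline{\Omega}}((\eta^{(d_0)},1)\| ({\eta'}^{(d_0)},1))
=\sum_{j=1}^{d'} \eta_j
\big(\tilde{\Omega}^j(\eta^{(d_0)},c)- \tilde{\Omega}^j({\eta'}^{(d_0)},c)\big)
=\tilde{D}_{\tilde\Omega}((\eta^{(d_0)},c)\|({\eta'}^{(d_0)},c)),
\end{align}
which is $\tilde{D}_{\tilde\Omega}$ of \eqref{BK1+M} evaluated at the two points of $\Xi_{\cal M}$. I would take care here that the index $j=d'$ carries weight $\eta_{d'}=1$, matching the lone $\tilde{\Omega}^{d'}$ term in \eqref{ZXC2}.

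\textbf{Step 3: Translate to the distribution picture and apply the hypothesis.}
Finally, using the identification $\tilde{\Omega}^j(\eta)=\sum_{x}g^j(x)\Omega_p[\tilde{P}_\eta](x)$ together with the biorthogonality $\sum_x f_j(x)g^i(x)=\delta^i_j$ and the representation $\tilde{P}_\eta(x)=\sum_j \eta_j g^j(x)+g^{d'}(x)$, the sum $\sum_{j=1}^{d'}\eta_j \tilde{\Omega}^j$ collapses to $\sum_{x}\tilde{P}_\eta(x)\,\Omega_p[\tilde{P}_\eta](x)$-type pairings, exactly as in the passage \eqref{O1}--\eqref{O2}. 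Setting $P_X=\tilde{P}_{(\eta^{(d_0)},c)}$ and $Q_X=\tilde{P}_{({\eta'}^{(d_0)},c)}$, both lie in ${\cal M}_p$, so the hypothesis $\sum_x P_X(x)(\Omega_p[P_X](x)-\Omega_p[Q_X](x))\ge 0$ gives exactly $D_{\overline{\Omega}}\ge 0$, which is \eqref{MCY}.

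\textbf{The main obstacle.}
The only delicate point is the index bookkeeping in Step 2: keeping the role of the normalization coordinate $d'$ (weight $1$) distinct from the constrained coordinates $d_0+1,\ldots,d'-1$ (weights $c_j$), and verifying that the biorthogonality relation indeed turns the $g$-weighted sums back into an honest expectation against $\tilde{P}_\eta$. Once the two points are confirmed to lie in ${\cal M}_p$, no inequality work remains — the result is a direct inheritance of the assumed nonnegativity.
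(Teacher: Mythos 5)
Your proposal is correct and follows essentially the same route as the paper's proof: unfold $D_{\overline{\Omega}}$ via \eqref{ZXC1}--\eqref{ZXC2}, recognize that the reduced sum reassembles (through the biorthogonality of the $f_j$ and $g^i$) into $\sum_x \tilde{P}_{(\eta^{(d_0)},c)}(x)\bigl(\Omega_p[\tilde{P}_{(\eta^{(d_0)},c)}](x)-\Omega_p[\tilde{P}_{({\eta'}^{(d_0)},c)}](x)\bigr)$, and apply the hypothesis with $P_X=\tilde{P}_{(\eta^{(d_0)},c)}$ and $Q_X=\tilde{P}_{({\eta'}^{(d_0)},c)}$. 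The paper compresses all of this into a single displayed identity, so your version is just a more explicit account of the same bookkeeping.
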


\begin{proof}
Any element of ${\cal M}_p$ is written as $\tilde{P}_{
(\eta^{(d_0)},c)}$.
Since \eqref{ZXC1} and \eqref{ZXC2}
imply
$\sum_{x \in \cX}\tilde{P}_{
(\eta^{(d_0)},c)}(x) (\Omega_p[\tilde{P}_{
(\eta^{(d_0)},c)}](x)-\Omega_p[\tilde{P}_{
({\eta'}^{(d_0)},c)}](x))
=D_{\overline{\Omega}}((\eta^{(d_0)},1)\|( {\eta'}^{(d_0)},1))
$, 
we have \eqref{MCY}.
\end{proof}

Then, for $\theta_{(d_0+1)} \in \mathbb{R}^{d_0+1}$,
we have
\begin{align}
\frac{\partial \phi_\kappa}{\partial \theta ^{j}}(\theta_{(d_0+1)})
&=\sum_{x \in \cX} f_j(x) e^{\sum_{j=1}^{d_0}f_j(x) \theta^j + \theta^{d_0+1}} ,\\
\frac{\partial \phi_\kappa}{\partial \theta ^{d_0+1}}(\theta_{(d_0+1)})
&=\sum_{x \in \cX} e^{\sum_{j=1}^{d_0}f_j(x) \theta^j + \theta^{d_0+1}} \notag\\
&=\bigg(\sum_{x \in \cX} e^{\sum_{j=1}^{d_0}f_j(x) \theta^j}\bigg)
e^{\theta^{d_0+1}} 
\end{align}
for $j=1, \ldots, d_0$.
Hence, for $(\theta_{(d_0)},\theta^{d_0+1}) \in \mathbb{R}^{d_0+1}$,
the $e$-projection 
$\Pro^{(e),\phi_\kappa}_{\mathcal{M}} 
(\theta_{(d_0)},\theta^{d_0+1})$ 
%to ${\cal M}:= \{\eta \in \mathbb{R}^{d_0+1}|\eta_{d_0+1}=1\}$
is given as
\begin{align}
(\Pro^{(e),\phi_\kappa}_{\mathcal{M}} (
(\theta_{(d_0)},\theta^{d_0+1}))^j =&\theta^j, \\
(\Pro^{(e),\phi_\kappa}_{\mathcal{M}} 
((\theta_{(d_0)},\theta^{d_0+1}))^{d_0+1} =&
u(\theta_{(d_0)})\notag\\
:=&
-\log 
\bigg(\sum_{x \in \cX} e^{\sum_{j=1}^{d_0}f_j(x) \theta^j }
\bigg)
%-\log \phi_\kappa(\theta).
\end{align}
for $j=1, \ldots, d_0$.
Hence, as discussed in Example \ref{Ex1},
the $e$-projection 
$\Pro^{(e),\phi_\kappa}_{\mathcal{M}} 
(\theta_{(d_0)},\theta^{d_0+1}) $ can be easily calculated.
Since Algorithm \ref{AL1} can be done by the pair of 
\begin{align}
\eta^{(d_0)}_{[t]}&:=
(\frac{\partial \phi_\kappa}{\partial \theta ^{j}}
(\theta_{(d_0)}^{[t]},u(\theta_{(d_0)}^{[t]})))_{j=1}^{d_0}, \\
\theta_{(d_0)}^{[t+1]}&:=
%\theta^{[t]} -\frac{1}{\gamma} \Omega(\theta^{[t]})=
\theta_{(d_0)}^{[t]}  - \frac{1}{\gamma} 
\overline{\Omega}(\eta^{(d_0)}_{[t]}),
\end{align}
Algorithm \ref{AL1} is simplified as 
Algorithm \ref{AL3}.

\begin{algorithm}
\caption{BD-based AB algorithm for $\min_{P \in {\cal M}_p} \tilde{\cal G}_p(P)$
with $\phi_\kappa$}
\Label{AL3}
\begin{algorithmic}
\STATE {Choose the initial value 
$\theta_{(d_0)}^{[1]} \in \mathbb{R}^{d_0}$;} 
\REPEAT 
%\STATE {Set $\hat{\theta}=\hat{\theta}^{[t]} \in \Theta_{\mathcal{M}}$;} 
%\STATE Calculate $\eta^{[t]}:=
%(\frac{\partial \phi_\kappa}{\partial \theta ^{j}}
%(\theta^{[t]},u(\theta^{[t]})))_{j=1}^{d_0}$;
\STATE Calculate 
\begin{align}
\theta_{(d_0)}^{[t+1]}:=
\theta_{(d_0)}^{[t]}  - \frac{1}{\gamma} 
\overline{\Omega}
\Big(\frac{\partial \phi_\kappa}{\partial \theta ^{j}}
\big(\theta_{(d_0)}^{[t]},
u(\theta_{(d_0)}^{[t]})\big)_{j=1}^{d_0}\Big);
\Label{ITE1}\end{align}
\UNTIL{convergence;} 
\STATE{We denote the convergent of 
$\eta^{(d_0)}_{[t]}$ by $\eta^{(d_0)}_{[\infty]}$.}
\STATE{\bf output} $\overline{\cal G}_\kappa(\eta^{(d_0)}_{[\infty]})$.
\end{algorithmic}
\end{algorithm}

However,
$D^{\phi_\kappa^*}(({\eta'}^{(d_0)},1)\|({\eta}^{(d_0)},1))$
does not coincide with 
$D^{\phi^*}(({\eta'}^{(d_0)}, c,1)\|
({\eta}^{(d_0)}, c,1))$, in general.
Therefore, the algorithm in the previous paper \cite{Hmixture}
does not coincide with our algorithm with 
the objective function $ \tilde{\cal G}$ defined in \eqref{OY1}
and the Bregman Divergence system 
based on the convex function $\phi_\kappa$ defined by \eqref{BNX}, in general.

Due to the above difference,
there is a risk that 
$\tilde{P}_{(\eta_{[t]}, c)}$
has negative components.
To cover this case, we need to 
define 
$\overline{\Omega}^j(\eta^{(d_0)})$
for $j=1, \ldots,d_0+1$ even in this case.
In the above discussion, we assume that 
the terms $\overline{\Omega}^j(\eta^{(d_0)})$
are defined even when $\tilde{P}_{(\eta_{[t]}, c)}$
has negative components.
The method for the extension of the definition depends on the problem setting.
We discuss this problem only in several special cases
in the latter sections.

\section{Application to em-algorithm}\Label{S6}
The em-algorithm is an algorithm to calculate the minimum divergence between an exponential family ${\cal E}_p$ and a mixture family ${\cal M}_p$ over a sample space ${\cal X}$.
%Now, we consider the Bregman divergence system based on a convex function $\phi$.
The aim is to calculate the following minimum value:
\begin{align}
\min_{P \in {\cal M}_p,Q \in {\cal E}_p} 
D(P\|Q).
\end{align}
This problem appears in Boltzmann machine \cite{AKN}, rate-distribution \cite{H23}.
In the em-algorithm, we alternatively apply 
the $e$-projection $\Pro^{(e)}_{\mathcal{M}_p}$
and
the $m$-projection $\Pro^{(m)}_{\mathcal{E}_p}$.
In the case of probability distributions,
the $m$-projection $\Pro^{(m)}_{\mathcal{E}_p}$
is written as an affine map,
but 
the $e$-projection $\Pro^{(e)}_{\mathcal{M}_p}$
has a more complicated form.
When $k$ constraints define the mixture family $\mathcal{M}_p$,
the $e$-projection $\Pro^{(e)}_{\mathcal{M}_p}$
is given as a convex minimization with $k$ variables.
Therefore, 
the $e$-projection $\Pro^{(e)}_{\mathcal{M}_p}$
is the bottleneck in the em-algorithm.

To avoid the above convex minimization,
we employ 
the Bregman-divergence-based Arimoto-Blahut algorithm based on the convex function $\phi_\kappa$ defined in \eqref{BNX}.
For this aim, we assume that 
the mixture family $\mathcal{M}_p$ is defined in
the same way as Section \ref{S5}.
Then, we have
\begin{align}
&\min_{P \in {\cal M}_p,Q \in {\cal E}_p} 
D(P\|Q)=
\min_{P \in {\cal M}_p}
D\big(P \big\|\Pro^{(m)}_{\mathcal{E}_p}(P)\big)\notag \\
=&
\min_{P \in {\cal M}_p}
\sum_{x \in {\cal X}}
P(x) \big(\log P(x)
-\log \Pro^{(m)}_{\mathcal{E}_p}(P) (x)\big).\Label{BNQ}
\end{align}

Now, we discuss how to define
$\overline{\Omega}^j(\eta^{(d_0)})$
for $j=1, \ldots,d_0+1$ even 
when $\tilde{P}_{(\eta^{[t]}, c)}$
has negative components.
Since the $m$-projection 
$\Pro^{(m)}_{\mathcal{E}_p}$
is written as an affine map,
it can be naturally extended to the above negative case.
Hence,
for $\eta \in \mathbb{R}^{d_0}$, we can define
$\Pro^{(m)}_{\mathcal{E}_p}(\tilde{P}_{(\eta^{[t]}, c)})$
while it potentially has negative components.
Then, using the notation $(x)_+:=\max(x,\epsilon)$
with very small $\epsilon>0$,
for $\eta \in \mathbb{R}^{d_0}$,
we define 
\begin{align}
&\overline{\Omega}^j(\eta) \notag\\
:=&
\sum_{x\in \cX}g^j(x) 
\Big(\log (\tilde{P}_{(\eta,c)}(x))_+
-\log (\Pro^{(m)}_{\mathcal{E}_p}(\tilde{P}_{(\eta,c)}) (x))_+\Big),\\
&\overline{\Omega}^{d_0+1}(\eta) \notag\\
:=&
\sum_{j=d_0+1}^{d-1} c_j
\sum_{x\in \cX}g^j(x) 
\Big(\log (\tilde{P}_{(\eta,c)}(x))_+\notag\\
&\hspace{20ex} -\log (\Pro^{(m)}_{\mathcal{E}_p}(\tilde{P}_{(\eta,c)}) (x))_+\Big)\notag\\
&+
\sum_{x\in \cX}g^d(x) 
(\log (\tilde{P}_{(\eta,c)}(x))_+
-\log (\Pro^{(m)}_{\mathcal{E}_p}(\tilde{P}_{(\eta,c)}) (x))_+).
\end{align}
Then, we define $\overline{\cal G}_\kappa(\eta)$
as \eqref{form9}.
When the components of 
$\tilde{P}_{(\eta,c)}$
are greater than $\epsilon$,
$\overline{\cal G}_\kappa(\eta)$ equals the objective function in \eqref{BNQ} with $P=\tilde{P}_{(\eta,c)}$.
Hence, when $\epsilon$ is sufficiently small,
the minimum of 
$\overline{\cal G}_\kappa(\eta)$ equals
the minimum \eqref{BNQ}.
Then, we can apply Algorithm \ref{AL3}.
Since Algorithm \ref{AL3} has
no convex minimization,
the presented method does not need to solve
any convex minimization.

\section{Application to rate-distortion theory}\Label{S7}
\subsection{Theoretical analysis}\Label{S7-1}
We consider two systems ${\cal X}=
\{1, \ldots, d_1\}$ and ${\cal Y}=
\{1, \ldots, d_2\}$.
Given a distortion function $R(x,y)$ and a distribution
$P_X$ on ${\cal X}$, 
the optimal compression rate in rate-distortion theory 
is formulated as the following minimization problem.
\begin{align}
\min_{W_{Y|X}}\bigg\{I(X;Y)_{W_{Y|X}\times P_X}\bigg|
\sum_{x,y}(W_{Y|X}\times P_X)(y,x) R(x,y)=c
\bigg\},\label{BN2}
\end{align}
where
$W_{Y|X}$ is a conditional distribution on $\mathcal{Y}$ for all $X \in \mathcal{X}$,
and $W_{Y|X}\times P_X$ is defined as the joint distribution as
\begin{align*}
(W_{Y|X}\times P_X)(y,x):=
W_{Y|X}(y|x)P_X(x).
\end{align*}
In this setting,
the distribution $P_X$ is fixed and the conditional distribution
$W_{Y|X}$ needs to be optimized to minimizing the mutual information $I(X;Y)_{W_{Y|X}\times P_X}$.
%$W_{Y|X}$ is a conditional distribution on ${\cal X}$
%with the conditioned on ${\cal Y}$.
We consider the exponential family
${\cal E}_p:=
\{P_X \times Q_Y| Q_Y \in {\cal P}({\cal Y})\}$.
Then, we have
\begin{align}
I(X;Y)_{W_{Y|X}\times P_X}=
D\big(W_{Y|X}\times P_X\big\| 
\Pro^{(m)}_{\mathcal{E}_p} (W_{Y|X}\times P_X)\big).
\end{align}
Hence, the minimization \eqref{BN2} is a special case of 
the problem in the previous section \cite{H23}.
Here, we discuss how to apply our algorithm given in Section \ref{S5} to this example in a more precise way.

We assume that $R(d_1,d_2)\neq R(d_1,d_2-1)$.
The set 
$\Big\{W_{Y|X}\times P_X\Big|
\sum_{x,y}W_{Y|X}\times P_X(y,x) R(x,y)=c
\Big\}$ is a mixture family.
Also, $I(X;Y)_{W_{Y|X}\times P_X}$ is written as
$
\sum_{x}P_X(x)\sum_{y}W_{Y|X}(y|x)
(\log W_{Y|X}(y|x)-
\log (\sum_{x'}P_X(x') W_{Y|X}(y|x'))
)$.
To calculate \eqref{BN2},
we define functions 
$f_{(i-1)(d_2-1)+j}(x,y):=\delta_{i}(x)\delta_j(y)$ 
for $i=1,\ldots,d_1-1$
and $j=1,\ldots,d_2-1$.
$f_{(d_1-1)(d_2-1)+j}(x,y):=\delta_{d_1}(x)\delta_j(y)$
for $j=1,\ldots,d_2-2$.
We choose its dual functions as 
\begin{align}
&g^{(i-1)(d_2-1)+j}(x,y)\notag\\
:=&
\delta_{i}(x)(\delta_j(y)-\delta_{d_2}(y))\notag\\
&-\frac{R(i,j)- R(i,d_2)}{R(d_1,d_2-1)- R(d_1,d_2)}
\delta_{d_1}(x)(\delta_{d_2-1}(y)-\delta_{d_2}(y))
\end{align}
for $i=1,\ldots,d_1-1, 
j=1,\ldots,d_2-1$, and
\begin{align}
&g^{(d_1-1)(d_2-1)+j}(x,y)\notag\\
:=&
\delta_{d_1}(x)(\delta_j(y)-\delta_{d_2}(y))\notag\\
&-
\frac{R(d_1,j)- R(d_1,d_2)}{R(d_1,d_2-1)- R(d_1,d_2)}
\delta_{d_1}(x)(\delta_{d_2-1}(y)-\delta_{d_2}(y))
\end{align}
 for $j=1,\ldots,d_2-2$.
Setting $d_0= d_1(d_2-1)-1$,
we have
\begin{align}
\sum_{x,y} f_{i}(x,y)g^j(x,y)&=\delta_{i,j},\\
\sum_{x,y} R(x,y)g^j(x,y)&=0,\\
\sum_{x,y} \delta_i(x)g^j(x,y)&=0, \\
\sum_{x,y} f_{i'}(x,y)\delta_{i'}(x) \delta_{d_2}(y)&=0,\\
\sum_{x,y} f_{i'}(x,y)\delta_{d_1}(x) \delta_{d_2-1}(y)&=0
\end{align}
for $i,j=1\dots,d_0$ and $i'=1, \ldots, d_1$.

Then, we choose a mixture parameter
$\eta_j:= \sum_{x,y}P(x,y) f_j(x,y)$ for $j=1, \ldots, d_0$.
We write the distribution corresponding to the mixture parameter $\eta$ by 
$W_{Y|X|\eta} \times P_X $.
Then, we have
\begin{align}
&\sum_{x}P_X(x)\sum_{y}W_{Y|X|\eta}(y|x)
\Big(\log W_{Y|X|\eta}(y|x)\notag\\
&-
\log \big(\sum_{x'}P_X(x') W_{Y|X|\eta}(y|x')\big)\Big)\notag\\
=&
\sum_{x,y}
\Big(\sum_{j}\eta_j g^j(x,y)
+P_X(x)\delta_{d_2}(y)\notag\\
&
+\frac{c-\sum_{x}P_X(x)R(x,d_2)}{R(d_1,d_2-1)- R(d_1,d_2)}
\delta_{d_1}(x) 
(\delta_{d_2-1}(y)-\delta_{d_2}(y))
\Big) \notag\\
&\cdot\Big(\log W_{Y|X|\eta}(y|x)-
\log \big(\sum_{x'}P_X(x') W_{Y|X|\eta}(y|x')\big)\Big)\notag\\
=&
\sum_{j}\eta_j 
\sum_{x,y} g^j(x,y)
\cdot\Big(\log W_{Y|X|\eta}(y|x)\notag\\
&-
\log \big(\sum_{x'}P_X(x') W_{Y|X|\eta}(y|x')\big)\Big)\notag\\
&+\sum_{x,y}
\Big(P_X(x)\delta_{d_2}(y)
+
\frac{c-\sum_{x}P_X(x)R(x,d_2))}{R(d_1,d_2-1)- R(d_1,d_2)}
\delta_{d_1}(x) \Big)
\notag\\
&\cdot\Big(\log W_{Y|X|\eta}(y|x)-
\log \big(\sum_{x'}P_X(x') W_{Y|X|\eta}(y|x')\big)\Big).
\end{align}

In this case, the joint distribution 
$W_{Y|X|\eta} \times P_X (x,y)$ is written as the following distribution:
\begin{align}
&P_\eta(x,y)\notag\\
:=&\Big(\sum_{j}\eta_j g^j(x,y)
+P_X(x)\delta_{d_2}(y)\notag\\
&
+\frac{c-\sum_{x'}P_X(x')R(x',d_2)
}{R(d_1,d_2-1)- R(d_1,d_2)}
\delta_{d_1}(x) 
(\delta_{d_2-1}(y)-\delta_{d_2}(y))
\Big).\Label{BND}
\end{align}
Then, using the notation $(x)_+:=\max(x,\epsilon)$
with very small $\epsilon>0$,
we set
\begin{align}
&\tilde{\Omega}^j(\eta)\notag\\
:=&\sum_{x,y}
g^j(x,y)
\cdot\Big(\log \big(P_\eta(x,y)\big)_+ -\log P_X(x)\notag\\
&\qquad -\log 
\Big(\sum_{x'\in {\cal X}}P_\eta(x',y)\Big)_+\Big),\\
%W_{Y|X|\eta}(y|x)- \log (\sum_{x'}P_X(x') W_{Y|X|\eta}(y|x'))
&\tilde{\Omega}^{d_0+1}(\eta)\notag\\
:=&\sum_{x,y}
\Big(P_X(x)\delta_{d_2}(y)
+
\frac{c-\sum_{x}P_X(x)R(x,d_2))}{R(d_1,d_2-1)- R(d_1,d_2)}
\delta_{d_1}(x) \Big)\notag\\
&\cdot\Big(\log \big(P_\eta(x,y) \big)_+ -\log P_X(x)-\log 
\Big(\sum_{x'\in {\cal X}}P_\eta(x',y)\Big)_+\Big).
\end{align}
In the definition \eqref{BND}, there is a possibility that 
one of the values 
$\{P_\eta(x,y)\}_{x,y}$ is a negative value.
Hence, we define 
$\tilde{\Omega}^j(\eta)$ and 
$\tilde{\Omega}^{d_0+1}(\eta)$ in the above way.
%When at least one component $P_\eta(x,y)$ is a negative value,
We have
\begin{align}
&\overline{\cal G}_\kappa(\eta)\notag\\
=&
\sum_{x,y}
P_\eta(x,y) \log (P_\eta(x,y))_+ 
-\sum_{x}P_X(x)\log P_X(x)
\notag\\
&-\sum_{y} \big(\sum_{x'} P_\eta(x',y)\big)\log 
\Big(\sum_{x'} P_\eta(x',y)\Big)_+,
\end{align}
and
\begin{align}
\sum_{x,y:P_\eta(x,y)<0}|P_\eta(x,y)|
\ge 
\sum_{y:(\sum_{x'} P_\eta(x',y))<0}
\Big|\Big(\sum_{x'} P_\eta(x',y)\Big)\Big|.\Label{BNI}
\end{align}
When the equality does not hold in the inequality \eqref{BNI},
$\overline{\cal G}_\kappa(\eta)\to \infty$ as $\epsilon \to 0$.
When one of the values 
$\{P_\eta(x,y)\}_{x,y}$ is a negative value,
the function $\overline{\cal G}_\kappa(\eta)$ takes a very large number
with a sufficient small $\epsilon>0$
in most cases.
Therefore, we can expect that 
the minimization with this function avoids the case when 
one of the values 
$\{P_\eta(x,y)\}_{x,y}$ is a negative value.

\subsection{Conventional method}\Label{S5S1}
Next, we discuss the conventional case for rate-distortion theory.
That is,
the Bregman divergence is given as the KL divergence of probability distributions.
In this case, using 
the one-variable smooth convex function
%$\min_{\tau} \hat{F}[P_{Y}](\tau)$, where
$\hat{F}[P_{Y}](\tau) :=$ $\sum_{x}P_{X}(x)$ $
\log \Big(\sum_{y} P_{Y}(y) e^{{\tau}(D- d(x,y)))}\Big)
$,
the reference \cite[Algorithm 9]{H23} proposed Algorithm \ref{protocol1-2}, which
corresponds to Algorithm \ref{AL1} with $\gamma=1$
when the Bregman divergence is given as KL divergence.

\begin{algorithm}
\caption{em-algorithm for rate distortion}
\Label{protocol1-2}
\begin{algorithmic}
\STATE {Choose the initial distribution $P_Y^{(1)}$ on $\Y$.
Then, we define the initial joint distribution $P_{XY,(1)}$ as 
$P_Y^{(1)}\times P_X $;} 
\REPEAT 
\STATE {\bf m-step:}\quad 
%If $P_{XY,(t+1)} $ satisfies the condition
%$\sum_{x,y}d(x,y)P_{XY,(t+1)}(x,y)\le D$,
%we set $P_{XY}^{(t+1)} $ to be $P_{XY,(t+1)} $ and stop the algorithm.
%Otherwise, 
Calculate $P_{Y|X}^{(t+1)}$ as 
$P_{Y|X}^{(t+1)}(y|x):=
P_{Y}^{(t)}(y)e^{\bar{\tau} d(x,y)}
\Big(\sum_{y'}P_{Y}^{(t)}(y') e^{\bar{\tau} d(x,y')} \Big)^{-1}$,
where $\bar{\tau}$ is given as
$\argmin_{\tau} 
\hat{F}[P_{Y}^{(t)}](\tau)$.
%This choice can be written in the way as \eqref{const1-2-0}.
\STATE {\bf e-step:}\quad 
Calculate $P_{Y}^{(t+1)}(y)$ as 
$\sum_{x \in \X}P_{Y|X}^{(t+1)}(y|x)P_X(x)$.
\UNTIL{convergence.} 
\end{algorithmic}
\end{algorithm}
However,
in the realistic case, we need to care about the error of the minimization
in the $m$-step.
To address this problem,
we need to clarify what algorithm to be used for the convex minimization.
As a typical one, employing the Newton method, we revise Algorithm \ref{protocol1-2} as Algorithm \ref{protocol1-3}.

\begin{algorithm}
\caption{em-algorithm for rate distortion with the Newton method}
\Label{protocol1-3}
\begin{algorithmic}
\STATE {Choose the initial distribution $P_Y^{(1)}$ on $\Y$.
Then, we define the initial joint distribution $P_{XY,(1)}$ as 
$P_Y^{(1)}\times P_X $;} 
\REPEAT 
\STATE {\bf m-step:}\quad 
Set $\tau_0=0$.
\REPEAT 
\STATE
Set 
\begin{align}
\tau_k=\tau_{k-1}-\frac{\frac{d}{d\tau}\hat{F}[P_{Y}^{(t)}](\tau_{k-1}) }{
\frac{d^2}{d\tau^2}\hat{F}[P_{Y}^{(t)}](\tau_{k-1})}.
\Label{ITE2}
\end{align}
\UNTIL{$k=f(t)$.} 
\par
Set $\bar{\tau}$ as the above outcome.
Calculate $P_{Y|X}^{(t+1)}$ as $P_{Y|X}^{(t+1)}(y|x):=
P_{Y}^{(t)}(y)e^{\bar{\tau} d(x,y)}
\Big(\sum_{y'}P_{Y}^{(t)}(y') e^{\bar{\tau} d(x,y')} \Big)^{-1}$.
\STATE {\bf e-step:}\quad 
Calculate $P_{Y}^{(t+1)}(y)$ as 
$\sum_{x \in \X}P_{Y|X}^{(t+1)}(y|x)P_X(x)$.
\UNTIL{convergence.} 
\end{algorithmic}
\end{algorithm}

\subsection{Numerical analysis for classical rate distortion without side information}\Label{S5S1-R}
To see how our algorithm works, we numerically compare
our algorithm with the algorithm by \cite[Section V-C]{H23}.
For this aim, we choose the same example as \cite[Section V-C]{H23}, i.e., we focus on
the case when $d_1=d_2=3$, $c=1.5$, and 
the cost function $R$ is chosen as
\begin{align}
\left(
\begin{array}{ccc}
d(1,1) & d(1,2) & d(1,3)
\\
d(2,1) & d(2,2) & d(2,3)
\\
d(3,1) & d(3,2) & d(3,3)
\end{array}
\right)
=\left(
\begin{array}{ccc}
0 & 1 & 2\\
1 & 2 & 0\\
3 & 0 & 1
\end{array}
\right),
\end{align}
and the distribution $P_X$ is chosen as
\begin{align}
P_X(1)=0.5,~
P_X(2)=0.3,~
P_X(3)=0.2.
\end{align}

In this case, the application of the algorithm by \cite{H23}
guarantees that 
the minimum mutual information $I(X;Y)$ is
\begin{align}
I(X;Y)_{P_{XY}^{*}}:= 0.100039,
\end{align}
and it is attained by the conditional distribution given as
\begin{align}
P_{Y|X}^{*}=
\left(
\begin{array}{ccc}
0.0855598 &0.188594& 0.430983 \\
0.22431&0.494433 &0.139579 \\
0.69013& 0.316974&0.429438
\end{array}
\right).
\end{align}

In the above example,
we compare Algorithm \ref{protocol1-3} 
 and Algorithm \ref{AL3} with the choices given in Subsection \ref{S7-1}.
In Algorithm \ref{protocol1-3}, we choose 
$f_1(t)=5 +t$ and $f_2(t)=\lceil 5+3\log t \rceil$.
The mutual information in the numerical calculation in \cite{H23} achieves 
$0.100039$.
In Algorithm \ref{AL3} with the choices given in Subsection \ref{S7-1},
we set the initial parameter $\theta^{[1]}$,
$\epsilon$, and $\gamma$
 to be $(0,0,0,0,0)$, $0.0001$, and $50$, respectively.
For the comparison, 
counting the iterations in the Newton method, 
we consider that $t$-th step in Algorithm \ref{protocol1-3} has 
the cumulative number of iterations 
$\sum_{k=1}^t f_i(k)$ with $i=1,2$.
Based on the above idea, we made a numerical calculation as Fig. \ref{Error}.
This comparison shows the advantage of the method presented in Subsection \ref{S7-1} in the initial phase.

\begin{figure}[htbp]
\begin{center}
 \includegraphics[width=0.95\linewidth]{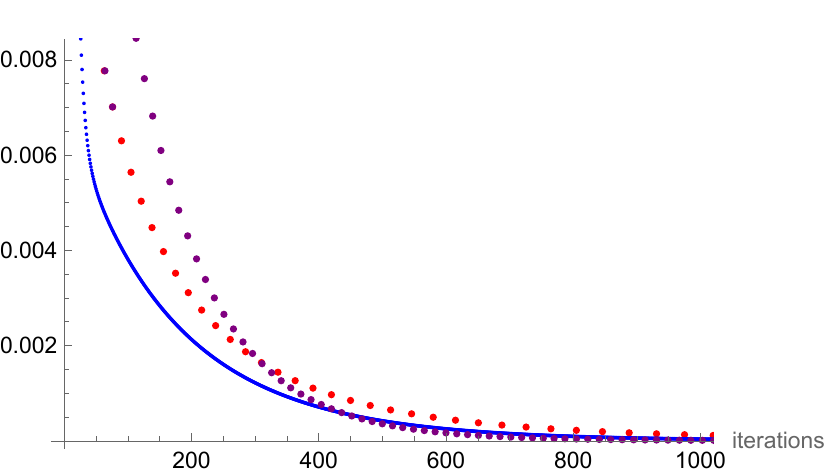}
  \end{center}
\caption{Behavior of  
$\tilde{\cal G}(\theta^{[t]})-\tilde{\cal G}(\theta^{[\infty]})$
of the minimum mutual information.
Vertical axis shows the value of $\tilde{\cal G}(\theta^{[t]})-\tilde{\cal G}(\theta^{[\infty]})$. 
The horizontal axis shows the number of iterations.
The red points show 
the number of iterations of the calculation \eqref{ITE2}
in Algorithm \ref{protocol1-3}
with $f_1$.
The purple points show the same number in Algorithm \ref{protocol1-3}
with $f_2$}.
The blue points shows 
the number of iterations of the calculation \eqref{ITE1}
in Algorithm \ref{AL3} with the choices given in Subsection \ref{S7-1}.
\Label{Error}
\end{figure}   

\section{Application to quantum states with linear constraints}\Label{S8}
We apply our method
to the case when ${\cal M}$ is given as a mixture family of quantum states, i.e., 
a set of quantum states with linear constraints.
The contents of this section is a quantum version of the contents of Section \ref{S5}.

We consider a finite-dimensional, i.e., $\overline{d}$-dimensional system
${\cal H}$.
We introduce $\overline{d}^2-1$ Hermitian matrices 
$A_1,\ldots, A_{\overline{d}^2-1}$ over $\cH$
such that 
$A_{1}, \ldots, A_{\overline{d}^2-1}$ and $I$ 
are linearly independent.
We also choose 
$\overline{d}^2$ Hermitian matrices 
$B^1, \ldots, B^{\overline{d}^2}$ on $\cX$ such that
\begin{align}
\Tr A_{j}B^i=\delta^i_{j},
\end{align}
where $A_{\overline{d}^2}:=I$.
We define a mixture family ${\cal M}_q$
as the set of states to satisfy the linear constraints
\begin{align}
\Tr \rho A_j =c_j \hbox{ for }j=d_0+1, \ldots, \overline{d}^2-1.\Label{BNI2}
\end{align}
Then, we have two kinds of parametrization of the state  on $\cH$.
Using the natural parameter $\theta \in \mathbb{R}^{\overline{d}^2-1}$,
we parameterize the distribution as
$\rho_\theta:= 
e^{\sum_{j=1}^{\overline{d}-1} A_j \theta^j
-\theta^{\overline{d}^2}(\theta)}$,
where $\theta^{\overline{d}^2}(\theta):=-\log 
\Tr e^{\sum_{j=1}^{\overline{d}-1} A_j \theta^j}$.
Using the mixture parameter $\eta \in \mathbb{R}^{\overline{d}^2-1}$,
we parameterize the state as
$\tilde{\rho}_\eta :=\sum_{j=1}^{\overline{d}^2-1}
\eta_j B^j+ B^{\overline{d}^2}$.
%, where $\eta_d(\eta)$ is defined to be $
%1-\sum_{j=1}^{d-1} \eta_j \sum_{x\in \cX}g^j(x)$.
The state $\tilde{\rho}_\eta$ belongs to ${\cal M}_q$
if and only if
$\eta_{j}=c_j$ for $j=d_0+1, \ldots, \overline{d}^2-1$.

Then, we introduce a Hermitian matrix $\Omega_q[\rho]$ 
on $\cH$, which depends on 
the state $\rho \in {\cal M}_q$.
We define an objective function
${\cal G}_q(\rho):=\Tr \rho \Omega_q[\rho]$, 
and
consider the minimization problem
\begin{align}
\min_{\rho \in {\cal M}_q} {\cal G}_q(\rho).\Label{BNY2}
\end{align}
Under the condition
\begin{align}
\Tr \rho (\Omega_q[\rho]-\Omega_q[\sigma])
\le \gamma D(\rho\|\sigma),
\end{align}
the paper \cite{HL} introduced an algorithm, where
$D(\rho\|\sigma)$ is the quantum relative entropy
$\Tr \rho (\log \rho -\log \sigma)$.
The previous algorithm \cite{HL} coincides with our algorithm 
under the following choices.

We define the function $\phi(\theta,\theta^{\overline{d}^2})$ 
on $\mathbb{R}^{\overline{d}^2}$ as 
\begin{align}
\phi(\theta,\theta^{\overline{d}^2})
=\Tr e^{\sum_{j=1}^{\overline{d}^2-1} A_j \theta^j
+\theta^{\overline{d}^2}} .
\Label{BCR2}
\end{align}
We choose the mixture family ${\cal M}$ as
\begin{align}
{\cal M} %\nonumber \\
:= \Bigg\{(\theta,\theta^{d^2}) 
\in \mathbb{R}^{\overline{d}^2}
\Bigg| 
\begin{array}{l}
\frac{\partial \phi}{\partial \theta^j}(\theta,\theta^{\overline{d}^2})
=c_j,\\ % \hbox{ for }j=d_0+1, \ldots d^2-1, \\
\frac{\partial \phi}{\partial 
\theta^{\overline{d}^2}}(\theta,\theta^{\overline{d}^2})
=1 
\end{array}
\Bigg\},
\end{align}
where the index $j$ in the above condition
runs from $d_0+1$ to $\overline{d}^2-1$.

The relation $\rho \in {\cal M}_q$ holds 
if and only if $(\theta,\theta^{\overline{d}^2}(\theta)) \in {\cal M}$.
The relation $\tilde{\rho}_\eta \in {\cal M}_q$ holds 
if and only if $
(\eta,1) \in \Xi_{\cal M}$.

For 
%$\eta^{(d_0)}, {\eta'}^{(d_0)}$ such that $(\eta^{(d_0)},c,1),({\eta'}^{(d_0)},c,1)
$\eta,\eta'$ such that 
$(\eta,1),(\eta',1)\in \Xi_{\cal M} $,
we choose
\begin{align}
\tilde{\Omega}^j(\eta):=
\Tr B^j \Omega_q[\tilde{\rho}_\eta]
\hbox{ for }j=1, \ldots, \overline{d}^2
\end{align}
and have
\begin{align}
&{\cal G}_q(\tilde{\rho}_\eta)
= \tilde{\cal G}(\eta):=
\sum_{j=1}^{\overline{d}^2-1} \eta_j\tilde{\Omega}^j(\eta) 
+\tilde{\Omega}^{\overline{d}^2}(\eta) 
=\Tr\tilde{\rho}_\eta \Omega_q[\tilde{\rho}_{\eta}],
%&=\sum_{j=1}^d \eta_j\tilde{\Omega}^j(\eta') 
%\Label{O2Q}
\Label{O1Q}\\
&D(\tilde{\rho}_\eta\|\tilde{\rho}_{\eta'})=
D^{\phi^*}((\eta',1)\|(\eta,1)).
\Label{O3Q}
\end{align}
Therefore, the algorithm in the previous paper \cite{HL}
coincides with our algorithm with 
the objective function $ \tilde{\cal G}$ defined in \eqref{O1Q}
and the Bregman divergence system 
based on the convex function $\phi$ defined by \eqref{BCR2}.
%the above choice of $\phi$.
However, the previous algorithm and our algorithm of the above choice 
have the process for the $e$-projection 
$\Pro^{(e),\phi}_{\mathcal{M}} (\theta) $.
Its step needs to solve a convex minimization with 
$\overline{d}^2-d_0-1$
parameters. 
This problem can be resolved by the same method as Section \ref{S5}.
The papers \cite{RISB,HY,HL}
explain concrete choices of $\Omega$ and linear constraints
including classical-quantum channel coding,
information bottleneck.
In particular, 
when 
an exponential family
${\cal E}:=
\{\rho_\theta\}_{\theta}$ with 
$\rho_\theta:=\exp(\sum_{j=1}^k \theta^j Y_j)
/\Tr \exp(\sum_{j=1}^k \theta^j Y_j)$ is given,
where $Y_j$ is an Hermitian matrix,
we often consider the minimum divergence
between the exponential family ${\cal E}$
and the mixture family ${\cal M}$;
\begin{align}
&\min_{\rho \in {\cal M}}
\min_{\sigma \in {\cal E}}D(\rho\|\sigma)
=
\min_{\rho \in {\cal M}}
D(\rho\| \Gamma_{\cal E}^{(m)}(\rho))
\notag \\
=&\min_{\rho \in {\cal M}}
\Tr (\log \rho -\log \Gamma_{\cal E}^{(m)}(\rho)),
\end{align}
where
\begin{align}
\Gamma_{\cal E}^{(m)}(\rho):=
\argmin_{\sigma \in {\cal E}}D(\rho\|\sigma).
\end{align}
This problem including quantum rate-distortion theory has been studied 
by using quantum em algorithm in \cite{H23}.
In contrast, the paper \cite{HSF} applies the mirror descent method to quantum rate-distortion theory. 
Our method can be applied to this problem as well.

\section{Analysis for our general algorithm and Proof of Theorem \ref{NLT}}\Label{S9}
Indeed, Algorithm \ref{AL1} is characterized as the iterative minimization of 
the following two-variable function, i.e., the extended objective function;
\begin{align}
J_\gamma(\theta,\theta'):=\gamma D^\phi(\theta\|\theta')+
\sum_{j=1}^d \eta_j(\theta) \Omega^j(\theta').
\end{align}
To see this fact, %consider an iterative algorithm, 
as a generalization of a part of \cite[Lemma 3.2]{RISB}, $
\min_{\theta \in \Theta}  J_\gamma(\theta,\theta')
$ is calculated as follows.
\begin{lemma}\Label{L1}
We have $
\argmin_{\theta \in \Theta}  J _\gamma(\theta,\theta')=
\Pro^{(e),\phi}_{\mathcal{M}} \circ{\cal F}_\gamma(\theta')$, i.e., 
\begin{align}
&\min_{\theta \in \Theta}  J_\gamma(\theta,\theta')=
J _\gamma \Big(\Pro^{(e),\phi}_{\mathcal{M}} \circ{\cal F}_\gamma(\theta'),\theta'\Big) \nonumber \\
=&
\gamma \Big(
D^\phi(\Pro^{(e),\phi}_{\mathcal{M}} \circ{\cal F}_\gamma(\theta')\| {\cal F}_\gamma(\theta')) %\nonumber\\
+ \phi(\theta') - \phi({\cal F}_\gamma(\theta'))
\Big)
,\Label{XMY} \\
& J _\gamma(\theta,\theta')
 =\min_{\tilde{\theta} \in \Theta}  J _\gamma(\tilde{\theta},\theta')
+\gamma D^\phi(\theta\| \Pro^{(e),\phi}_{\mathcal{M}} \circ{\cal F}_\gamma(\theta')) \Label{XMY2UU}  \\
=&J _\gamma\big(\Pro^{(e),\phi}_{\mathcal{M}} \circ{\cal F}_\gamma(\theta'),\theta' \big)
+\gamma D^\phi(\theta\| \Pro^{(e),\phi}_{\mathcal{M}} \circ{\cal F}_\gamma(\theta')).
\Label{XMY2} 
\end{align}
\end{lemma}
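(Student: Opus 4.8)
The plan is to observe that, as a function of its first argument, $J_\gamma(\theta,\theta')$ collapses to a single Bregman divergence measured against the shifted point ${\cal F}_\gamma(\theta')$, plus a term independent of $\theta$. First I would expand $D^\phi(\theta\|\theta')$ via its definition \eqref{XZL} and substitute the componentwise relation $\Omega^j(\theta')=\gamma\big((\theta')^j-({\cal F}_\gamma(\theta'))^j\big)$ that follows immediately from \eqref{BF6}. The terms $\sum_j \eta_j(\theta)(\theta')^j$ then cancel, and after collecting what remains and adding and subtracting $\gamma\phi({\cal F}_\gamma(\theta'))$ one arrives at the identity
\begin{align}
J_\gamma(\theta,\theta')=\gamma D^\phi\big(\theta\big\|{\cal F}_\gamma(\theta')\big)+\gamma\big(\phi(\theta')-\phi({\cal F}_\gamma(\theta'))\big).
\Label{plan1}
\end{align}

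Since the second summand is constant in $\theta$, minimizing $J_\gamma(\cdot,\theta')$ over the mixture family ${\cal M}$ is equivalent to minimizing $D^\phi(\theta\|{\cal F}_\gamma(\theta'))$ over ${\cal M}$, which by the definition \eqref{ZCV} of the $e$-projection is attained precisely at $\Pro^{(e),\phi}_{\mathcal{M}}\circ{\cal F}_\gamma(\theta')$. Substituting this minimizer back into \eqref{plan1} reproduces the evaluated minimum \eqref{XMY} verbatim, which settles both the $\argmin$ claim and the first displayed equality.

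For the decomposition \eqref{XMY2}, I would invoke the Pythagorean identity \eqref{BVY} with $\overline{\theta}={\cal F}_\gamma(\theta')$, namely $D^\phi(\theta\|{\cal F}_\gamma(\theta'))=D^\phi\big(\theta\big\|\Pro^{(e),\phi}_{\mathcal{M}}\circ{\cal F}_\gamma(\theta')\big)+D^\phi\big(\Pro^{(e),\phi}_{\mathcal{M}}\circ{\cal F}_\gamma(\theta')\big\|{\cal F}_\gamma(\theta')\big)$, which holds for every $\theta\in{\cal M}$. Feeding this split into \eqref{plan1} and recognizing that the portion not involving $\theta$ reassembles, again via \eqref{plan1} applied at $\Pro^{(e),\phi}_{\mathcal{M}}\circ{\cal F}_\gamma(\theta')$, into $J_\gamma\big(\Pro^{(e),\phi}_{\mathcal{M}}\circ{\cal F}_\gamma(\theta'),\theta'\big)$ yields \eqref{XMY2}; equation \eqref{XMY2UU} is then the same statement with the leading term written abstractly as $\min_{\tilde\theta}J_\gamma(\tilde\theta,\theta')$.

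The one delicate point is the bookkeeping in \eqref{plan1}: the cancellation of the $\theta'$-terms and the compensating $\pm\gamma\phi({\cal F}_\gamma(\theta'))$ must be tracked exactly, since every subsequent equality rests on that clean form. The use of the Pythagorean theorem \eqref{BVY} requires the minimizer to lie in ${\cal M}$, which is automatic because it is an $e$-projection onto ${\cal M}$; beyond the standing assumption that $\phi$ is $C^2$-class and strictly convex, no further regularity is needed.
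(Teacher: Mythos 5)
Your proposal is correct and follows essentially the same route as the paper's own proof: both reduce $J_\gamma(\theta,\theta')$ to $\gamma\bigl(D^\phi(\theta\|{\cal F}_\gamma(\theta'))+\phi(\theta')-\phi({\cal F}_\gamma(\theta'))\bigr)$ via the substitution $\Omega^j(\theta')=\gamma((\theta')^j-{\cal F}_\gamma^j(\theta'))$, then split the divergence with the Pythagorean identity \eqref{BVY} at $\overline{\theta}={\cal F}_\gamma(\theta')$ and read off the minimizer as the $e$-projection. The bookkeeping you flag as delicate is exactly the chain of equalities the paper carries out in \eqref{ASS4}, so no gap remains.
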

\begin{proof}
Since ${\cal F}_\gamma(\theta')=
\theta' - \frac{1}{\gamma} \Omega[\theta']$, we have
\begin{align}
&J _\gamma(\theta,\theta') \nonumber\\
=&\gamma \Big(\sum_{d=1}^d \eta_j(\theta)(\theta^j- {\theta'}^j + \frac{1}{\gamma} \Omega^j(\theta')) 
- \phi(\theta)+\phi(\theta') \Big) \nonumber\\
=&\gamma \sum_{d=1}^d \eta_j(\theta)(\theta^j-{\cal F}_\gamma^j(\theta'))
- \phi(\theta)
+ \phi({\cal F}_\gamma(\theta'))
\nonumber\\&+
\phi(\theta') - \phi({\cal F}_\gamma(\theta'))
\Big) \nonumber\\
=&\gamma \Big(D^\phi(\theta\| {\cal F}_\gamma(\theta'))
+ \phi(\theta') - \phi({\cal F}_\gamma(\theta'))
\Big)\nonumber\\
=&\gamma \Big(
D^\phi(\theta\| \Pro^{(e),\phi}_{\mathcal{M}} \circ{\cal F}_\gamma(\theta'))
+D^\phi(\Pro^{(e),\phi}_{\mathcal{M}} \circ{\cal F}_\gamma(\theta')\| {\cal F}_\gamma(\theta'))\nonumber\\
&+ \phi(\theta') - \phi({\cal F}_\gamma(\theta'))
\Big),
\Label{ASS4}
\end{align}
where the final equation follows from
the relation:
\begin{align}
D^\phi(\theta\|{\cal F}_\gamma(\theta'))
=&
D^\phi(\theta\| \Pro^{(e),\phi}_{\mathcal{M}} \circ{\cal F}_\gamma(\theta'))
\nonumber\\
&+D^\phi(\Pro^{(e),\phi}_{\mathcal{M}} \circ{\cal F}_\gamma(\theta')\|{\cal F}_\gamma(\theta')),
\Label{NM9}
\end{align}
which is Eq. \eqref{BVY} with $\overline{\theta}={\cal F}_\gamma(\theta')$.
Since only the term 
$D^\phi(\theta\| \Pro^{(e),\phi}_{\mathcal{M}} \circ{\cal F}_\gamma(\theta'))
$ depends on $\theta$  in \eqref{ASS4},
the minimum 
$\min_{\theta \in \Theta}  J_\gamma(\theta,\theta')$
is given as \eqref{XMY}, and it is realized with 
$\Pro^{(e),\phi}_{\mathcal{M}} \circ{\cal F}_\gamma(\theta')$.

Applying \eqref{XMY} into the final line of \eqref{ASS4},
we obtain \eqref{XMY2UU}.
Since the minimum in \eqref{XMY2UU} is realized when 
$\tilde{\theta}=
\Pro^{(e),\phi}_{\mathcal{M} \circ{\cal F}_\gamma(\theta')}$, 
we obtain \eqref{XMY2}.
\end{proof}

As a generalization of another part of \cite[Lemma 3.2]{RISB}, we can calculate 
$\argmin_{\theta' \in \Theta}  J _\gamma(\theta,\theta')$ as follows.
\begin{lemma}\Label{L2}
Assume that two elements $\theta,\theta' \in \Theta$ satisfy the 
condition \eqref{BK1+}.
Then, we have $\theta=\argmin_{\theta' \in \Theta}  J _\gamma(\theta,\theta')$, i.e., 
\begin{align}
J _\gamma(\theta,\theta')\ge J _\gamma(\theta,\theta) .
\end{align}
\end{lemma}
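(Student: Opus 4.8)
The plan is to prove the inequality by directly computing the gap $J_\gamma(\theta,\theta')-J_\gamma(\theta,\theta)$ and showing that it is exactly the slack appearing in hypothesis \eqref{BK1+}. This reduces the whole statement to a one-line algebraic identity, so no optimization over $\theta'$ is actually needed despite the $\argmin$ formulation.

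First I would evaluate the ``diagonal'' value $J_\gamma(\theta,\theta)$. The key simplification is that the Bregman divergence of a point from itself vanishes: setting $\theta_1=\theta_2=\theta$ in definition \eqref{XZL} gives $D^\phi(\theta\|\theta)=0$, so the penalty term disappears and
\begin{align}
J_\gamma(\theta,\theta)=\sum_{j=1}^d \eta_j(\theta)\Omega^j(\theta).
\end{align}
Next I would subtract this from the general value $J_\gamma(\theta,\theta')=\gamma D^\phi(\theta\|\theta')+\sum_{j=1}^d \eta_j(\theta)\Omega^j(\theta')$. Since the weights $\eta_j(\theta)$ are evaluated at the common first argument $\theta$ in both expressions, the linear-in-$\Omega$ terms combine cleanly into the two-point function $D_\Omega$ of \eqref{BK1+}:
\begin{align}
J_\gamma(\theta,\theta')-J_\gamma(\theta,\theta)
&=\gamma D^\phi(\theta\|\theta')
-\sum_{j=1}^d \eta_j(\theta)\bigl(\Omega^j(\theta)-\Omega^j(\theta')\bigr)\nonumber\\
&=\gamma D^\phi(\theta\|\theta')-D_\Omega(\theta\|\theta').
\end{align}
Finally, hypothesis \eqref{BK1+} states precisely that $D_\Omega(\theta\|\theta')\le \gamma D^\phi(\theta\|\theta')$, so the right-hand side is nonnegative, which yields $J_\gamma(\theta,\theta')\ge J_\gamma(\theta,\theta)$ and hence $\theta=\argmin_{\theta'\in\Theta} J_\gamma(\theta,\theta')$.

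I do not expect a genuine obstacle here; the argument is essentially the observation that the $\Omega$-linear parts of $J_\gamma(\theta,\theta')$ and $J_\gamma(\theta,\theta)$ telescope into $-D_\Omega(\theta\|\theta')$. The only points requiring care are bookkeeping ones: recognizing that $D^\phi(\theta\|\theta)=0$ so that the diagonal penalty drops, and tracking the sign convention in the definition of $D_\Omega$ together with the fact that $\eta_j(\theta)$ is frozen at $\theta$ in both terms, which is exactly what makes the cancellation exact rather than merely approximate. This lemma is the natural ``dual'' companion to Lemma~\ref{L1}: Lemma~\ref{L1} minimizes $J_\gamma$ over its first argument, while this one shows the second-argument minimizer is the diagonal, and together they drive the alternating-minimization interpretation of Algorithm~\ref{AL1}.
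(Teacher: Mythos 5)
Your proposal is correct and follows exactly the paper's own argument: the paper likewise notes $J_\gamma(\theta,\theta)={\cal G}(\theta)$ (i.e., the diagonal Bregman term vanishes) and then computes $J_\gamma(\theta,\theta')-J_\gamma(\theta,\theta)=\gamma D^\phi(\theta\|\theta')-D_\Omega(\theta\|\theta')\ge 0$ via \eqref{BK1+}. No differences worth noting.
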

\begin{proof}
Eq. \eqref{BK1+} guarantees that
\begin{align}
&J _\gamma(\theta,\theta')-J _\gamma(\theta,\theta) 
=J _\gamma(\theta,\theta')-{\cal G}(\theta) \nonumber \\
=&
\gamma D^\phi(\theta\|\theta')+
\sum_{j=1}^d \eta_j(\theta) \Omega^j(\theta')-{\cal G}(\theta) \nonumber \\
=&\gamma D^\phi(\theta\|\theta')-D_\Omega(\theta \|\theta')
\ge 0 \Label{XMY5}.
\end{align}
\end{proof}

Now, we prove Theorem \ref{NLT}.
when all pairs 
$(\theta,\theta')=(\theta^{[t]},\theta^{[t+1]})$
satisfies \eqref{BK1+}, 
the relations 
\begin{align}
{\cal G}(\theta^{[t]}) &=J _\gamma(\theta^{[t]},\theta^{[t]})
\stackrel{(a)}{\ge}
J _\gamma(\theta^{[t+1]},\theta^{[t]}) \nonumber\\
& \stackrel{(b)}{\ge} J _\gamma(\theta^{[t+1]},\theta^{[t+1]})= {\cal G}(\theta^{[t+1]})
\Label{SAC}
\end{align}
hold under Algorithm \ref{AL1}, % and the condition (A1).
where $(a)$ follows from \eqref{XMY} of Lemma \ref{L1}
and $(b)$ follows from Lemma \ref{L2}.
Thus, Algorithm \ref{AL1} 
always iteratively improves the value of the objective function.
Thus, when the minimum of ${\cal G}(\theta)$ exists,
the relation \eqref{SAC} guarantees that 
the sequence $\{{\cal G}(\theta^{[t]})\}$ converges.

% \appendices
%\onecolumn\newpage
%\appendix
%\begin{widetext}
\section{Proof of Theorem \ref{TH1}}\Label{S10}
\subsection{Preparation for proof of Theorem \ref{TH1}}%\subsubsection{}
To show Theorem \ref{TH1}, we prepare the following lemma.
\begin{lemma}\Label{LLX}
For any density matrices $\theta,\theta' \in \Theta$, we have
\begin{align}
&D^\phi(\theta\| \theta' )- D^\phi(\theta\| \Pro^{(e),\phi}_{\mathcal{M}} \circ{\cal F}_\gamma(\theta'))  \nonumber\\
=&\frac{1}{\gamma}J_\gamma(\Pro^{(e),\phi}_{\mathcal{M}} \circ{\cal F}_\gamma(\theta'),\theta') 
-\frac{1}{\gamma}{\cal G}(\theta)
+\frac{1}{\gamma}
D_\Omega(\theta\|\theta')
\Label{XM1} \\
=&\frac{1}{\gamma}{\cal G}(\Pro^{(e),\phi}_{\mathcal{M}} \circ{\cal F}_\gamma(\theta')) -\frac{1}{\gamma}{\cal G}(\theta) 
+
D^\phi(\Pro^{(e),\phi}_{\mathcal{M}} \circ{\cal F}_\gamma(\theta')\|\theta')
\nonumber \\
&-\frac{1}{\gamma} D_\Omega(\Pro^{(e),\phi}_{\mathcal{M}} \circ{\cal F}_\gamma(\theta') \|\theta')
+\frac{1}{\gamma}D_\Omega(\theta\|\theta')
.\Label{XM2}
\end{align}
%In addition, when $\Omega$ is defined for any distribution in ${\cal P}({\cal X})$,
%the above relations holds for any distribution $\theta' \in {\cal P}({\cal X})$.
\end{lemma}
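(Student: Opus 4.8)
The plan is to reduce both displayed identities to a single elementary rewriting of the extended objective $J_\gamma$, combined with the projection decomposition already established in Lemma \ref{L1}. Throughout I write $\hat\theta := \Pro^{(e),\phi}_{\mathcal{M}} \circ {\cal F}_\gamma(\theta')$ for the $e$-projected point.

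First I would record the one genuine computation, namely that the cross term of $J_\gamma$ is exactly ${\cal G}(\theta)-D_\Omega(\theta\|\theta')$. Since $\sum_{j=1}^d \eta_j(\theta)\Omega^j(\theta') = \sum_{j=1}^d \eta_j(\theta)\Omega^j(\theta) - \sum_{j=1}^d \eta_j(\theta)(\Omega^j(\theta)-\Omega^j(\theta'))$, the definitions \eqref{form1} of ${\cal G}$ and \eqref{BK1+} of $D_\Omega$ give immediately
\begin{align}
J_\gamma(\theta,\theta') = \gamma D^\phi(\theta\|\theta') + {\cal G}(\theta) - D_\Omega(\theta\|\theta'). \Label{JID}
\end{align}
This single identity, evaluated at two different points, drives the whole proof.

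For \eqref{XM1}, I would solve \eqref{JID} for $\gamma D^\phi(\theta\|\theta')$ and then insert the decomposition \eqref{XMY2} of Lemma \ref{L1}, namely $J_\gamma(\theta,\theta') = J_\gamma(\hat\theta,\theta') + \gamma D^\phi(\theta\|\hat\theta)$. This yields $\gamma D^\phi(\theta\|\theta') = \gamma D^\phi(\theta\|\hat\theta) + J_\gamma(\hat\theta,\theta') - {\cal G}(\theta) + D_\Omega(\theta\|\theta')$; transposing the term $\gamma D^\phi(\theta\|\hat\theta)$ and dividing by $\gamma$ reproduces \eqref{XM1} verbatim. The Pythagorean geometry enters only through \eqref{XMY2}, which I may assume.

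For \eqref{XM2}, I would apply the same identity \eqref{JID} at the projected point, i.e.\ with $\theta$ replaced by $\hat\theta$, giving $J_\gamma(\hat\theta,\theta') = \gamma D^\phi(\hat\theta\|\theta') + {\cal G}(\hat\theta) - D_\Omega(\hat\theta\|\theta')$. Dividing by $\gamma$ and substituting the result into the right-hand side of \eqref{XM1} replaces $\frac{1}{\gamma}J_\gamma(\hat\theta,\theta')$ by $D^\phi(\hat\theta\|\theta') + \frac{1}{\gamma}{\cal G}(\hat\theta) - \frac{1}{\gamma}D_\Omega(\hat\theta\|\theta')$, which is exactly the right-hand side of \eqref{XM2}. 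I do not expect any real obstacle here: once \eqref{JID} is in hand both identities are pure bookkeeping, and the only nontrivial ingredient, the projection split \eqref{XMY2}, is already proved. The single point demanding care is the placement of the $1/\gamma$ factors when passing between $J_\gamma$ and the divergence terms; in particular $D^\phi(\hat\theta\|\theta')$ appears in \eqref{XM2} without a $1/\gamma$ precisely because $\gamma$ multiplies $D^\phi$ in \eqref{JID}.
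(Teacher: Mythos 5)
Your proposal is correct, and every step checks out: the identity you call \eqref{JID} is exactly the paper's \eqref{MLP} combined with the definition of $J_\gamma$, solving it for $\gamma D^\phi(\theta\|\theta')$ and inserting \eqref{XMY2} gives \eqref{XM1}, and re-applying the same identity at the projected point $\hat\theta=\Pro^{(e),\phi}_{\mathcal{M}}\circ{\cal F}_\gamma(\theta')$ (which is the content of the paper's \eqref{XMY5}) turns \eqref{XM1} into \eqref{XM2}. The difference from the paper is organizational rather than substantive: the paper proves the lemma by a single long chain of equalities that re-expands $D^\phi(\theta\|\theta')$ from the definition of the Bregman divergence, re-inserts ${\cal F}_\gamma$, and re-invokes the Pythagorean relation \eqref{NM9} inside the proof, whereas you treat Lemma \ref{L1}'s decomposition $J_\gamma(\theta,\theta')=J_\gamma(\hat\theta,\theta')+\gamma D^\phi(\theta\|\hat\theta)$ as a black box and reduce everything to bookkeeping with $J_\gamma$. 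Your version is shorter and localizes all the information geometry in Lemma \ref{L1}; the paper's version is self-contained at the cost of repeating the Pythagorean argument. Both rest on identical ingredients, so nothing is gained or lost mathematically, but your modular route makes it more transparent that \eqref{XM1} and \eqref{XM2} are formal consequences of Lemmas \ref{L1} and \ref{L2} rather than new computations.
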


\begin{proof}
We have
\begin{align}
-\sum_{d=1}^d \eta_j(\theta)\Omega^j(\theta')
=-{\cal G}(\theta)+ D_\Omega(\theta\|\theta').
\Label{MLP}
\end{align}
Using \eqref{MLP}, we have
\begin{align}
&D^\phi(\theta\| \theta')
- D^\phi(\theta\| \Pro^{(e),\phi}_{\mathcal{M}} \circ{\cal F}_\gamma(\theta'))
  \nonumber\\
=&\sum_{d=1}^d \eta_j(\theta)
(\theta^j- {\theta'}^j)
+\phi(\theta') - \phi(\theta)
\nonumber \\
&
- D^\phi(\theta\| \Pro^{(e),\phi}_{\mathcal{M}} \circ{\cal F}_\gamma(\theta'))
\nonumber \\
=&\sum_{d=1}^d \eta_j(\theta)
(\theta^j-{\cal F}_\gamma^j(\theta')+{\cal F}_\gamma^j(\theta')- {\theta'}^j)
+\phi({\cal F}_\gamma(\theta'))
\nonumber \\
&
 - \phi(\theta)
+\phi(\theta') 
- \phi({\cal F}_\gamma(\theta'))
- D^\phi(\theta\| \Pro^{(e),\phi}_{\mathcal{M}} \circ{\cal F}_\gamma(\theta'))
\nonumber \\
\stackrel{(a)}{=}&
D^\phi(\theta\|{\cal F}_\gamma(\theta'))
-\frac{1}{\gamma}\sum_{d=1}^d \eta_j(\theta)\Omega^j(\theta')
\nonumber \\
&+\phi(\theta') 
- \phi({\cal F}_\gamma(\theta'))
- D^\phi(\theta\| \Pro^{(e),\phi}_{\mathcal{M}} \circ{\cal F}_\gamma(\theta'))
\nonumber \\
\stackrel{(b)}{=}&
D^\phi(\Pro^{(e),\phi}_{\mathcal{M}} \circ{\cal F}_\gamma(\theta')\|{\cal F}_\gamma(\theta'))\nonumber \\
&-\frac{1}{\gamma}\sum_{d=1}^d \eta_j(\theta)\Omega^j(\theta')
+\phi(\theta') 
- \phi({\cal F}_\gamma(\theta'))
\nonumber \\
\stackrel{(c)}{=}&
\frac{1}{\gamma}J_\gamma(\Pro^{(e),\phi}_{\mathcal{M}} \circ{\cal F}_\gamma(\theta'),\theta') 
-\frac{1}{\gamma}{\cal G}(\theta)
+\frac{1}{\gamma} D_\Omega(\theta\|\theta')\Label{VB1}\\
\stackrel{(d)}{=}&
\frac{1}{\gamma}{\cal G}(\Pro^{(e),\phi}_{\mathcal{M}} \circ{\cal F}_\gamma(\theta')) -\frac{1}{\gamma}{\cal G}(\theta)
+D^\phi(\Pro^{(e),\phi}_{\mathcal{M}} \circ{\cal F}_\gamma(\theta')\|\theta')
\nonumber\\
&-\frac{1}{\gamma}D_{\Omega}
(\Pro^{(e),\phi}_{\mathcal{M}} \circ{\cal F}_\gamma(\theta')\|\theta')
+\frac{1}{\gamma}D_\Omega(\theta\|\theta'),\Label{VB2}
\end{align}
where each step is shown as follows.
$(a)$ follows from the definition of ${\cal F}_\gamma$.
$(c)$ follows from \eqref{XMY} and \eqref{MLP}. 
$(d)$ follows from \eqref{XMY5}. 
$(b)$ follows from \eqref{NM9}.
Then, \eqref{VB1} and \eqref{VB2} show 
\eqref{XM1} and \eqref{XM2}, respectively.
\end{proof}

\subsection{Proof of Theorem \ref{TH1}}\Label{S10}
\noindent{\bf Step 1:}\quad
The aim of this step is to show the following inequality;
\begin{align}
D^\phi(\theta_{*}\| \theta^{[t]})- D^\phi(\theta_{*}\| \theta^{[t+1]}) \ge \frac{1}{\gamma}{\cal G}(\theta^{[t+1]}) -\frac{1}{\gamma}{\cal G}(\theta_{*})\Label{XPZ2}
\end{align}
for $t=1, \ldots, t_0-1$.
We show these relations by induction.

For any $t$, 
by using the relation 
$ {\cal F}_\gamma(\theta^{[t]})=\theta^{[t+1]}$, 
the application of \eqref{XM2} of Lemma \ref{LLX} to the case with 
$\theta'=\theta^{[t]}$ and $\theta=\theta_{*}$
yields
\begin{align}
&D^\phi(\theta_{*}\| \theta^{[t]})- D^\phi(\theta_{*}\| \theta^{[t+1]})  \nonumber\\
=&\frac{1}{\gamma}{\cal G}( \theta^{[t+1]}) 
-\frac{1}{\gamma}{\cal G}(\theta_{*}) %\nonumber\\
+
D^\phi(\Pro^{(e),\phi}_{\mathcal{M}} \circ
{\cal F}_\gamma(\theta^{[t]})\|\theta^{[t]})\nonumber\\
&-\frac{1}{\gamma}D_\Omega(
\Pro^{(e),\phi}_{\mathcal{M}} \circ{\cal F}_\gamma(\theta^{[t]}) \|\theta^{[t]})
+\frac{1}{\gamma}D_\Omega(\theta_{*}  
\|\theta^{[t]}) \\
=&\frac{1}{\gamma}{\cal G}( \theta^{[t+1]}) 
-\frac{1}{\gamma}{\cal G}(\theta_{*}) 
+D^\phi(\theta^{[t+1]}\|\theta^{[t]})
\nonumber\\
&-\frac{1}{\gamma}D_\Omega(\theta^{[t+1]} \|\theta^{[t]})
+\frac{1}{\gamma}D_\Omega( \theta_{*} \|\theta^{[t]})
%-D^\phi({\cal F}_\gamma(\theta^{[t]})\| \theta^{[t+1]} ) 
.\Label{XME2}
\end{align}
Since two densities ${\cal F}_\gamma(\theta^{[t]})$ and 
$\theta^{[t]}$ satisfy the conditions \eqref{BK1+} and \eqref{BK2+},
we have
\begin{align}
\hbox{\rm(RHS of \eqref{XME2})} 
\ge %\stackrel{(a)}{\ge} &
\frac{1}{\gamma}{\cal G}(\theta^{[t+1]}) -\frac{1}{\gamma}{\cal G}(\theta_{*}).\Label{AXE}
\end{align}
The combination of \eqref{XME2} and \eqref{AXE} implies \eqref{XPZ2}.

\noindent{\bf Step 2:}\quad
This step aims to show \eqref{XME}.
Lemmas \ref{L1} and \ref{L2}
guarantee that
\begin{align}
{\cal G}(\theta^{[t+1]}) \le {\cal G}(\theta^{[t]}) \Label{AMK}.
\end{align}
We have
\begin{align}
& \frac{t_0}{\gamma} \Big(%\min_{t=2,\ldots,k+1}{\cal G}(\theta^{[t]})
{\cal G}(\theta^{[t_0+1]}) - {\cal G}(\theta_{*}) \Big)
%\nonumber\\
\stackrel{(a)}{\le}  \frac{1}{\gamma}\sum_{t=1}^{t_0}
{\cal G}(\theta^{[t+1]}) -{\cal G}(\theta_{*}) %J^{[t]}-J^*
\nonumber\\
\stackrel{(b)}{\le}& \sum_{t=1}^{t_0}
D^\phi(\theta_{*}\| \theta^{[t]})- D^\phi(\theta_{*}\| \theta^{[t+1]})  %\nonumber\\
\nonumber\\
=& 
D^\phi(\theta_{*}\| \theta^{[1]})-
D^\phi(\theta_{*}\| \theta^{[t_0+1]})  
\le D^\phi(\theta_{*}\| \theta^{[1]}),
\end{align}
where $(a)$ and $(b)$ follow from \eqref{AMK} and \eqref{XPZ2}, respectively.

\begin{remark}
When the condition \eqref{BK2+} does not hold,
the above proof does not work.
However, when 
$D^\phi(\theta^{[t+1]}\|\theta^{[t]})
-\frac{1}{\gamma}D_\Omega(\theta^{[t+1]} 
\|\theta^{[t]})
+\frac{1}{\gamma}D_\Omega(\theta_{*}  
\|\theta^{[t]})\ge 0$,
the above proof does work.
Maybe, there is a possibility 
that this proof locally works with a sufficiently large number $\gamma$.
\end{remark}

%\end{widetext}

\section{Discussion}\Label{S11}
We have generalized the algorithms by 
\cite{RISB,Hmixture,HL}
by using the concept of Bregman divergence, which is a key concept of information 
geometry.
While the existing generalized Arimoto-Blahut algorithm \cite{RISB,HL,Hmixture}
works with a general setting,
their objective function needs to be defined over a set of probability distributions or quantum states.
We have removed this restriction, and have extended their algorithm to the setting with Bregman divergence.
When our method is applied to the case with 
probability distributions or quantum states,
we are allowed to choose the Bregman divergence
as a divergence different from 
the KL divergence or quantum relative entropy. % between actual two distributions or two density matrices.

Indeed, the 
existing methods \cite{Hmixture,HL} require to calculate
$e$-projection, which requires a convex minimization and can be considered as the bottleneck in the algorithm.
Choosing the Bregman divergence as a different divergence from the actual divergence
in our general algorithm,
we have proposed a minimization-free-iteration
iterative algorithm 
for the general problem studied in \cite{Hmixture,HL}.
The existing method in \cite{Hmixture,HL} covers
the em-algorithm and the derivation of the optimal conditional distribution for the 
rate-distortion theory.
We have applied our minimization-free-iteration algorithm to these problems.
In particular, as a special case of the em-algorithm,
we have numerically applied 
our obtained algorithm to the rate-distortion theory.
Since our algorithm has no convex minimization in each iteration,
our algorithm has a smaller number of iterations than 
the existing algorithm presented in \cite{H23}
when we count the number of iterations in convex minimization in 
the algorithm presented in \cite{H23}.
%In our numerical analysis, our algorithm achieves the minimum value with a shorter time than the existing algorithm \cite{H23}.
Therefore, 
it is an interesting future problem to apply our method 
to the problem of the em-algorithm, i.e.,
the minimization of the divergence between a mixture family and an exponential family,
Indeed, since the em-algorithm can be used for 
%belief propagation based on 
graphical model \cite{Lauritzen},
it is expected that this research direction has a wider applicability in machine learning.

%In addition, 
When the objective function is a convex function, we have shown that the iteration of our algorithm coincides with the iteration of the mirror descent method.
Although this fact was shown by \cite{HSF1}
for the case discussed in \cite{RISB},
 this fact had been an open problem for a more general case studied in  \cite{Hmixture,HL}.

Our general framework can be applied to any function function with the form \eqref{form2}.
Although we have mainly discussed 
a minimization-free-iteration algorithm 
when the objective function is given over mixture family of probability distributions or quantum states,
the idea in Section \ref{S5} can be extendable to more general cases
as follows.
Once the optimization problem is given 
by a mixture parameter $\eta$ in the form \eqref{form2},
we choose a natural parameter $\theta$ to satisfy 
\eqref{B6K}.
Then, we can apply the discussion given in \eqref{form9}--\eqref{BNB}.
It is an interesting future problem to 
apply this idea to a more general class of objective functions
because this method works with 
the modification \eqref{form9}--\eqref{BNB} of the objective function.

\end{document}